\newtheorem{thm}{Theorem}[section]
\theoremstyle{definition}
\newtheorem{cor}[thm]{Corollary}
\newtheorem{lem}[thm]{Lemma}
\newtheorem{prop}[thm]{Proposition}
\newtheorem{defn}[thm]{Definition}
\newtheorem{fact}[thm]{Fact}
\newtheorem*{thmA}{Theorem A}
\newtheorem*{thmB}{Theorem B}
\newtheorem*{thmC}{Theorem C}
\newtheorem*{ques}{Open question}
\numberwithin{equation}{section}
\newcommand{\N}{\mathbb{N}}
\newcommand{\Z}{\mathbb{Z}}
\newcommand{\Q}{\mathbb{Q}}
\newcommand{\R}{\mathbb{R}}
\newcommand{\supp}{\operatorname{supp}}
\def \gcd {\operatorname{gcd}}
\newcommand{\Cal}{\mathcal}
\def \<{\langle}
\def \>{\rangle}
\def \((  {(\!(}
\def \)) {)\!)}
\begin{document}

\title[Generalized Cantor sets]{Definability and decidability in expansions by generalized Cantor sets}

\author[W. Balderrama]{William Balderrama}
\address
{Department of Mathematics\\University of Illinois at Urbana-Champaign\\1409 West Green Street\\Urbana, IL 61801}
\email{balderr2@illinois.edu}

\author[P. Hieronymi]{Philipp Hieronymi}
\address
{Department of Mathematics\\University of Illinois at Urbana-Champaign\\1409 West Green Street\\Urbana, IL 61801}
\email{phierony@illinois.edu}
\urladdr{http://www.math.uiuc.edu/\textasciitilde phierony}

\subjclass[2010]{Primary 03B25  Secondary 03B70, 03C64, 28A80}


\begin{abstract}
We determine the sets definable in expansions of the ordered real additive group by generalized Cantor sets. Given a natural number $r\geq 3$, we say a set $C$ is a generalized Cantor set in base $r$ if there is a non-empty $K\subseteq\{1,\ldots,r-2\}$ such that $C$ is the set of those numbers in $[0,1]$ that admit a base $r$ expansion omitting the digits in $K$. While it is known that the theory of an expansion of the ordered real additive group by a single generalized Cantor set is decidable, we establish that the theory of an expansion by two generalized Cantor sets in multiplicatively independent bases is undecidable.
\end{abstract}

\thanks{This is a preprint version.  Later versions might still contain significant changes.  Comments
are  welcome! The first author was supported by a Graduate Assistance in Areas of National Need (GAANN) Fellowship. The second author was partially supported by NSF grant DMS-1300402. }

\maketitle

\section{Introduction}

One of the most famous and well-studied objects in mathematics is the \textbf{middle-thirds Cantor set} $C$, a set that is constructed by repeatedly removing middle-thirds from the unit interval. As pointed out by Dolich, Miller, and Steinhorn \cite{DMS}, when we expand $(\R,<)$ by a predicate for $C$, the resulting structure is model-theoretically tame. However, by Fornasiero, Hieronymi, and Miller \cite{FHM}, the expansion $(\R,<,+,\cdot,C)$ of the real field by $C$ defines $\N$ and hence every projective set\footnote{Projective in sense of descriptive set theory. See Kechris \cite[Chapter V]{kechris}.}. This immediately raises the question of what happens when adding $C$ to a structure between $(\R,<)$ and $(\R,<,+,\cdot)$.\newline

\noindent In this note we will consider the expansion of the ordered real additive group $(\R,<,+)$ by $C$. In fact, we will not only consider expansions by the usual middle-thirds Cantor set, but by \textbf{generalized Cantor sets}. Given a natural number $r\geq 3$ and a nonempty $K\subseteq\{1,\ldots,r-2\}$, we define $C_{r,K}$ to be the set of those numbers in $[0,1]$ admitting a base $r$ expansion that omits the digits in $K$. We call $r$ the base of $C_{r,K}$. The classical middle-thirds Cantor set is then just $C_{3,\{1\}}$.\newline


\noindent While it has never been stated explicitly, it is known that the theory of the structure $(\R,<,+,C_{r,K})$ is decidable. For $r\in \N_{\geq 2}$, consider the expansion $\Cal T_r$ of $(\R,<,+)$ by a ternary predicate $V_r(x,u,k)$ that holds if and only if $u$ is an integer power of $r$, $k\in \{0,\dots,r-1\}$, and the digit of some base $r$ representation of $x$ in the position corresponding to $u$ is $k$. As shown in Boigelot, Rassart, and Wolper \cite{BRW}, it follows from B\"uchi's work in \cite{Buchi} that the theory of $\Cal T_r$ is decidable. For every non-empty $K\subseteq\{1,\ldots,r-2\}$, the Cantor set $C_{r,K}$ is $\emptyset$-definable in $\Cal T_r$, and therefore the theory of $(\R,<,+,C_{r,K})$ is decidable.\newline

\noindent This leads to the following two natural questions which we will address:
\begin{itemize}
\item[(Q1)] What can be said about sets definable in $(\R,<,+,C_{r,K})$?
\item[(Q2)] Are expansions of $(\R,<,+)$ by multiple generalized Cantor sets model-theoretically tame?
\end{itemize}
Before we address these, let us fix some notation. Say that two expansions $\Cal R$ and $\Cal R'$ of $(\R,<)$ are \textbf{interdefinable} if they define the same sets (with parameters). In such a situation, we write $\Cal R = \Cal R'$.
Let $W_r$ be the intersection of $V_r$ with $[0,1]\times r^{-\N} \times \{0,\dots,r-1\},$ and set $\Cal S_r := (\R,<,+,W_r)$.

\begin{thmA}\label{thm:1} Let $r\in \N_{\geq 3}$, and let $K\subseteq \{1,\ldots,r-2\}$ be nonempty. Then
\[
(\R,<,+,C_{r,K})=\Cal S_r.
\]
\end{thmA}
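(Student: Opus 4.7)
We establish the two inclusions separately.

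\emph{$(\R,<,+,C_{r,K})\subseteq\mathcal{S}_r$.} It suffices to define $C_{r,K}$ in $\mathcal{S}_r$. Consider
\[
C^*:=\bigl\{x\in[0,1] : \forall n\in\N\ \forall k\in K,\ \neg W_r(x,r^{-n},k)\bigr\},
\]
which is manifestly definable in $\mathcal{S}_r$ and consists precisely of those $x\in[0,1]$ every base-$r$ representation of which avoids $K$. Hence $C^*\subseteq C_{r,K}$, and the points of $C_{r,K}\setminus C^*$ are the $r$-adic rationals whose two representations disagree on membership in $K$. Since non-$r$-adic points of $C_{r,K}$ have a unique representation, lie in $C^*$, and are dense in $C_{r,K}$, we conclude $C_{r,K}=\overline{C^*}$. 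Topological closure is first-order definable from $<$ and $+$, completing this direction.

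\emph{$\mathcal{S}_r\subseteq(\R,<,+,C_{r,K})$.} We define $W_r$ from $C_{r,K}$ in three steps, exploiting the self-similarity
\[
C_{r,K}=\bigcup_{d\in\{0,\ldots,r-1\}\setminus K}\bigl(d\, r^{-1}+r^{-1}C_{r,K}\bigr).
\]
First, define $E:=\{r^{-n}:n\in\N_0\}$ by
$u\in E \iff u\in(0,1]\ \text{and}\ u\cdot C_{r,K}\subseteq C_{r,K}$;
scalar multiplication by each fixed rational is definable from $+$, so the displayed condition is first-order in $u$. Second, use $E$ together with the scaled-and-translated copies $a+u\cdot C_{r,K}$ (definable for $u\in E$, $a\in\R$) to define $\N$ in $(\R,<,+,C_{r,K})$ via a self-similar counting-of-levels argument, and hence $\Z$. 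Third, for $u=r^{-n}\in E$, $k\in\{0,\ldots,r-1\}$, and $x\in[0,1]$, set
\[
W_r(x,u,k)\iff \exists j\in\Z,\ 0\le j\le r^{n-1}-1,\ jru+ku\le x\le jru+(k+1)u;
\]
the non-strict inequalities automatically record both representations at $r$-adic boundary points.

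\emph{Main obstacles.} The crux is the first and second steps. For the first, one must rule out every $u\notin E$ with $u\cdot C_{r,K}\subseteq C_{r,K}$, which requires combinatorial base-$r$ digit analysis and essentially uses $K\neq\emptyset$: produce some $x\in C_{r,K}$ whose image $u\cdot x$ necessarily has a digit in $K$ in every representation. For the second, first-order definability over $(\R,<,+)$ alone cannot supply the induction needed to recover $\N$, so the full fractal structure of $C_{r,K}$ must be exploited; this is the deepest technical point.
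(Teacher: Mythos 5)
Your first direction is essentially fine (the closure argument works because $C_{r,K}$ is perfect and its non-$r$-adic points are dense in it), but the second direction, which is the entire content of the theorem, has two fatal problems. First, your definition of $E=r^{-\N}$ by the condition ``$u\in(0,1]$ and $u\cdot C_{r,K}\subseteq C_{r,K}$'' is not a first-order formula of $(\R,<,+,C_{r,K})$: here $u$ is a \emph{variable}, and the map $(u,x)\mapsto ux$ is not definable in any expansion of the ordered additive group unless that expansion already defines multiplication. Only scalar multiplication by a \emph{fixed} rational is definable from $+$, which gives one formula per rational and cannot be quantified over. Second, and more seriously, your Step 2 proposes to define $\N$ (and then $\Z$) in $(\R,<,+,C_{r,K})$, and your final formula for $W_r$ quantifies over $j\in\Z$. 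This is provably impossible: as the paper notes, by Belegradek's result $\Cal S_r$ does not define $\N$, so if Theorem A is true then neither does $(\R,<,+,C_{r,K})$. The step you flag as ``the deepest technical point'' is not merely hard; it cannot be carried out, so the approach as a whole cannot be repaired along these lines.

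The paper's route avoids both obstacles. It defines the bounded set $r^{-\N}$ (which is harmless, unlike $\N$) from the \emph{lengths of the complementary intervals} of $C_{r,K}$: these lengths are exactly $\bigcup_i (m_i-k_i)r^{-\N_{>0}}$, and a minimality trick over right endpoints extracts $r^{-\N}$. It then reads off the base-$r$ digits of an element $c\in C_{r,K}$ using the definable function $\mu(r^{-n},c)$ giving the best approximation of $c$ from the left by right endpoints of complementary intervals of length at least $r^{-n}$; comparing $\mu(r^{-n},c)$ with $\mu(r^{-(n-1)},c)$ recovers the $n$-th digit by purely order-theoretic and additive conditions, with no integer quantifier. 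Finally, it transfers from $C_{r,K}$ to all of $[0,1]$ via the identity $x=\frac{1}{r-1}(c_1+\cdots+c_{r-1})$ with each $c_i\in C_{r,K}$ having only digits $0$ and $r-1$, the digit of $x$ at a given position being the number of indices $i$ for which $c_i$ has digit $r-1$ there. If you want to salvage your outline, you would need to replace both your definition of $E$ and your digit-reading formula with mechanisms of this kind that never invoke $\N$, $\Z$, or variable-coefficient multiplication.
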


\noindent Theorem A determines the definable sets in an expansion by a single generalized Cantor set, giving an answer to our first question. A reader looking for a more detailed description of definable sets in $\Cal S_r$ may want to consult \cite{BRW} where a precise automata-theoretic description of definable sets in $\Cal T_r$ (and hence in $\Cal S_r$) is given. Observe that by Belegradek \cite[Corollary 1.7]{Belegradek}, $\Cal S_r$ does not define $\N$. Therefore, $(\R,<,+,C_{r,K})$ is not interdefinable with $\Cal T_r$.  While the theory of $(\R,<,+,C_{r,K})$ is decidable, it is very easy to deduce from Theorem A that the structure does not satisfy any of the combinatorial tameness notions invented by Shelah, such as NIP, NTP2, or $n$-dependence (see also Hieronymi and Walsberg \cite[Theorem B]{HW-Monadic}).
\newline

\noindent We now turn to the second question about expansions by multiple Cantor sets. Let $r,s\in \N_{\geq 3}$, and $K\subseteq \{1,\ldots,r-2\}$ and $L\subseteq \{1,\ldots,s-2\}$ be non-empty. Observe that whenever $\log_r(s)\in \Q$, we have $(\R,<,+,W_r)=(\R,<,+,W_s)$. This statement follows easily from the fact that $W_r$ and $W_{r^{\ell}}$ can be expressed in terms of each other for $\ell \in \N_{\geq 1}$.
Therefore, Theorem A immediately implies that if $\log_r(s) \in \Q$, then $(\R,<,+,C_{r,K},C_{s,L})= \Cal S_r$. We can thus restrict our attention to the case where $\log_r(s)\notin \Q$. In this situation, we are able to prove the following result.

\begin{thmB} Let $r,s\in \N_{\geq 3}$ with $\log_r(s)\notin\Q$, and let $K\subseteq \{1,\ldots,r-2\}$ and $L\subseteq \{1,\ldots,s-2\}$ be non-empty. Then $(\R,<,+,C_{r,K},C_{s,L})$ defines every compact set.
\end{thmB}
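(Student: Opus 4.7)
The plan is to use Theorem A to replace the Cantor sets with the scaled digit predicates $W_r, W_s$, and then exploit the multiplicative independence of $r$ and $s$ to climb from a tame expansion up to a structure rich enough to define every compact set.

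First I would invoke Theorem A to get $(\R,<,+,C_{r,K},C_{s,L}) = \Cal S$ where $\Cal S := (\R,<,+,W_r,W_s)$, reducing the problem to showing that $\Cal S$ defines every compact set. In $\Cal S$ the scale set $D_r := r^{-\N}$ is definable as the projection of $W_r$ onto its second coordinate (and similarly $D_s := s^{-\N}$), together with the successor relation on $D_r$ (since $v$ is the successor of $u$ in $D_r$ iff $v<u$ and no $u'\in D_r$ satisfies $v<u'<u$). This already produces definable copies of $(\N,\mathrm{succ})$ sitting inside $\Cal S$, one for each base.

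The core step, and the main obstacle, is to define $\N$ as an honest subset of $\R$. Here the hypothesis $\log_r(s)\notin\Q$ is essential. The Beatty-type function $f: D_r \to D_s$ sending $r^{-n}$ to the largest element of $D_s$ strictly below $r^{-n}$ is $\Cal S$-definable, and multiplicative independence forces the associated residues $n\log_s(r) \bmod 1$ to be equidistributed. I would leverage this density/equidistribution, together with the additive group structure on $\R$ and the two definable copies of $(\N,\mathrm{succ})$ coming from the two bases, to pin down $\N$ itself as a definable subset of $\R$; this is in the spirit of existing wildness results for expansions of $(\R,<,+)$ by multiplicatively independent scales, and I expect the argument to be where most of the technical work lies.

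Once $\N\subseteq\R$ is $\Cal S$-definable, the predicate $W_r$ allows expressing multiplication on $[0,1]$ by the standard long-multiplication recursion, indexed by $\N$, on base-$r$ digits, so that $\Cal S$ defines $(\R,<,+,\cdot,\N)$. In such a structure, standard coding via a pairing function on $\N$ together with digit expansions of real parameters makes every projective subset of $\R^n$ definable in those parameters. In particular, every compact $K\subseteq \R^n$, which can be written as the intersection of a decreasing sequence of finite unions of closed rational-endpoint boxes encoded by a single real parameter, is definable in $\Cal S$, completing the plan.
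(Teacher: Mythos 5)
Your opening reduction is the same as the paper's: Theorem A (applied once in each base) gives $(\R,<,+,C_{r,K},C_{s,L})=(\R,<,+,W_r,W_s)$, so everything comes down to the two-base structure (this is exactly how the paper deduces Theorem B from Theorems A and C). After that, however, both of your remaining steps have genuine gaps. First, your ``core step'' of defining $\N$ as a subset of $\R$ is never actually carried out --- you appeal to equidistribution of $n\log_s(r)\bmod 1$ and to unspecified ``existing wildness results'' --- and it is not even clear that $\N$ is definable here: $W_r$ and $W_s$ are bounded predicates, and the paper explicitly cites Pillay--Scowcroft--Steinhorn to note that global multiplication need not be definable in this structure, so one should not expect to recover unbounded arithmetic for free. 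Second, and decisively, the step ``once $\N$ is definable, $W_r$ gives multiplication on $[0,1]$ by the long-multiplication recursion'' is false: the structure $\Cal T_r=(\R,<,+,V_r)$ already defines $\N$ and has full access to base-$r$ digits at all positions, yet its theory is decidable (B\"uchi; Boigelot--Rassart--Wolper), so it cannot define multiplication even restricted to $[0,1]$. The carry recursion in long multiplication is precisely the kind of recursion that is not first-order expressible from digit predicates and $\N$ alone.

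The paper's proof of Theorem C avoids $\N$ and multiplication entirely. The engine is Fact \ref{fact:htplus} (a generalization of Hieronymi--Tychonievich), which outputs every subset of $D^n$ and every open subset of $I^n$ --- hence every compact set, by complementation --- directly from a dense $\omega$-orderable set together with a suitable decoding function $g$ whose fibers have interior. The number-theoretic input extracted from $\log_r(s)\notin\Q$ is not equidistribution but the arithmetic of finite expansions: when $r$ and $s$ are coprime, $(D_r-D_r)\cap(D_s-D_s)=\{0\}$ (Corollary \ref{cor:bool}) and Lemma \ref{lem:diff} applies; when $\supp(s)\subseteq\supp(r)$, one uses the definable map $r^{-d}\mapsto r^{-\lceil d\log_s(r)\rceil}$ and the Elgot--Rabin/Villemaire-style Corollary \ref{cor:infinitepreimage}; a $\gcd$ reduction handles the mixed case. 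To salvage your outline you would need to replace your second and third steps with an argument of this kind, since the target of your plan (first-order recovery of $(\R,<,+,\cdot,\N)$ on the nose) is both unreachable by the proposed means and unnecessary for the conclusion.
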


\noindent The theory of an expansion that defines every compact set is clearly undecidable, as it defines an isomorphic copy of $(\R,+,\cdot,\N)$. Indeed, every projective subset of $[0,1]^k$ is definable in such an expansion. However, while $(\R,<,+,C_{r,K},C_{s,L})$ defines every compact set, multiplication on $\R$ does not need to be definable, by Pillay, Scowcroft, and Steinhorn \cite{pss-betweengroupsandrings}.
\newline

\noindent We deduce Theorem B directly from Theorem A and the following analogue of Villemaire's theorem \cite[Theorem 4.1]{Villemaire}.

\begin{thmC} Let $r,s\in \N_{\geq 2}$ be such that $\log_r(s) \notin \Q$. Then $(\R,<,+,W_r,W_s)$ defines every compact set.
\end{thmC}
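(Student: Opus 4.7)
The plan is to show that $(\R, <, +, W_r, W_s)$ interprets $(\N, +, \cdot)$, and then leverage this interpretation together with $W_r$ to realize every compact set as a set definable with a real parameter.

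The crux is Step 1: interpreting $(\N, +)$. Both $r^{-\N}$ and $s^{-\N}$ are $\emptyset$-definable as the projections of $W_r$ and $W_s$ onto the second coordinate, and each is isomorphic as an ordered set to $(\N, >)$; by Belegradek's theorem neither alone suffices. To bring the hypothesis $\log_r(s) \notin \Q$ to bear, I would study the definable ``nearest-below'' function that sends each element $r^{-n} \in r^{-\N}$ to the largest element of $s^{-\N}$ at or below it; its graph encodes the Beatty-type sequence $n \mapsto \lceil n \log_s r \rceil$, whose irrational slope forces strong equidistribution. Using this graph together with the two ordered copies of $\N$, I would adapt Villemaire's combinatorial strategy (or invoke a real-variable analogue of the Cobham--Semenov theorem) to define an additive structure on the interpreted copy of $\N$.

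Step 2 promotes $(\N, +)$ to $(\N, +, \cdot)$. With $(\N, +)$ interpreted, the scalar multiplication $n \cdot y$ of an integer $n \in \N$ by a real $y \in [0,1]$ can be defined via a first-order formula that quantifies over a real encoding the carry sequence of base-$r$ long multiplication. Then, for $n \in \N$ and $N \in \N$ with $r^N > n$, the real $n \cdot r^{-N}$ has fractional base-$r$ expansion whose digits encode in reverse the base-$r$ digits of $n$, and $W_r$ makes these accessible, thereby defining $V_r$ on $\N$; analogously $V_s$. Villemaire's theorem \cite{Villemaire} then applies to $(\N, +, V_r, V_s)$ to yield multiplication on $\N$.

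Step 3 codes arbitrary compact sets. Any compact $K \subseteq [0,1]^k$ (general compact sets follow by an affine change of variable with integer coefficients) is determined by its ``occupancy tree'' at base $r$: the collection, for each depth $n$, of the $r^{-n}$-boxes intersecting $K$. A pairing function from $(\N, +, \cdot)$ linearizes this tree into an $\N$-indexed $\{0,1\}$-sequence, realized as the base-$r$ digit sequence of a single real parameter $\alpha \in [0,1]$. Since $K$ is closed, a point $x \in [0,1]^k$ lies in $K$ iff every box at every depth containing $x$ is marked in $\alpha$, a condition first-order definable from $\alpha$, $W_r$, and the interpretation of $(\N, +, \cdot)$. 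The principal obstacle is Step 1: extracting $(\N, +)$ from $(r^{-\N}, s^{-\N}, <)$ is where multiplicative independence must do essential work, and it is the genuine analogue in this setting of the heart of Villemaire's theorem---one that does not transfer off the shelf precisely because $\N$ is not initially available in $(\R, <, +, W_r, W_s)$.
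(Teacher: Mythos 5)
There is a genuine gap, and you have located it yourself: Step 1 is not a proof but a declaration that a proof should exist. You correctly identify the ``nearest element of $s^{-\N}$ below $r^{-n}$'' function, whose graph is the Beatty-type sequence $n \mapsto \lceil n\log_s r\rceil$, as the place where irrationality of $\log_r(s)$ must enter. But ``adapt Villemaire's combinatorial strategy to define an additive structure on the interpreted copy of $\N$'' does not transfer: Villemaire's argument lives in $(\N,+,V_r,V_s)$, where $+$ on $\N$ is a primitive, whereas here addition of exponents corresponds to \emph{multiplication} of the reals $r^{-m}\cdot r^{-n}$, which is exactly what $(\R,<,+)$ lacks. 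Nothing in your sketch explains how the Beatty graph yields $(\N,+)$ on the index set, and Step 2 inherits the problem (your ``carry-sequence'' definition of $n\cdot y$, and the claim that $V_s$ can be defined ``analogously'' on the same interpreted copy of $\N$ that was built from $r^{-\N}$, are both unsubstantiated). You also never use the dichotomy on whether $r$ and $s$ share a prime factor, which turns out to be essential: when they do, the sets $D_r$, $D_s$ of numbers with finite base-$r$ (resp.\ base-$s$) expansions intersect nontrivially, and this changes the argument.

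The actual proof never interprets $(\N,+,\cdot)$ as an intermediate step. Its engine is a generalization of the Hieronymi--Tychonievich theorem (Fact \ref{fact:htplus}): an expansion of $(\R,<)$ defining a dense $\omega$-orderable set $D$ in an interval together with a suitable ``decoding'' function $g$ defines every open subset of $I^n$, hence every compact set. When $\supp(r)\cap\supp(s)=\emptyset$, one checks $(D_r-D_r)\cap(D_s-D_s)=\{0\}$ and the two $\omega$-orderable sets $D_r$, $D_s$ feed directly into this criterion (Lemma \ref{lem:diff}); no Beatty sequence is needed, and irrationality of $\log_r(s)$ is not even used beyond $r\neq s$. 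When $\supp(s)\subseteq\supp(r)$, your nearest-below function does appear, as $f(r^{-d})=r^{-\lceil d\log_s r\rceil}$, but what is extracted from it is not an additive structure: after normalizing so that $\log_s(r)>1$, the irrational slope forces $f$ to skip values in a uniformly bounded but recurrent way, which is precisely the hypothesis of an Elgot--Rabin/Thomas-style argument (Corollary \ref{cor:infinitepreimage} and Proposition \ref{prop:infinitepreimage}) producing a definable function $r^{-\N}\to r^{-\ell\N}$ with all fibers infinite; such a function lets one code arbitrary digit strings and again lands in Fact \ref{fact:htplus}. The remaining case reduces to this one by defining $u^{-\N}$ for $u$ the part of $r$ supported on $\supp(r)\cap\supp(s)$, using $D_r\cap D_s=D_{\gcd(r,s)}$. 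So the missing idea in your proposal is the replacement of ``interpret arithmetic, then code compact sets'' by a direct coding criterion for which the Beatty function's value-skipping, not its additive combinatorics, is what must be verified.
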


\noindent A few remarks about the proof of Theorem C are in order. In the case where $r$ and $s$ are relatively prime, Theorem C follows from a slight generalization of Hieronymi and Tychonievich \cite[Theorem A]{HT} without significant use of further technology. However, when $r$ and $s$ share a common prime factor, we need to rely in addition on earlier ideas from \cite{Villemaire}. This extra complication arises from the fact that whenever $r$ and $s$ share a common prime factor, the set of numbers admitting a finite base $r$ expansion intersects non-trivially with the set of numbers admitting a finite base $s$ expansion.\newline

\noindent It is natural to ask whether there are any interesting structures between $(\R,<,+)$ and $(\R,<,+,\cdot)$ such that the theory of the expansion of such a structure by a single generalized Cantor set remains decidable. However, the answer to such a question is probably negative. For example, fix $a \in \R$ and let $\lambda_a \colon \R \to \R$ be the function that maps $x$ to $ax$, and consider $(\R,<,+,\lambda_a,C_{r,K})$ for some generalized Cantor set $C_{r,K}$. It was already pointed out in Fornasiero, Hieronymi, and Walsberg \cite[Corollary 3.10]{FHW} that $(\R,<,+,\lambda_a,C_{3,1})$ defines every compact set whenever $a$ is irrational. An inspection of the proof shows that the same argument works for a generalized Cantor set $C_{r,K}$.\newline

\noindent We finish with a remark about the optimality of Theorem A. Observe that it is an immediate consequence of Theorems A and C that $C_{r,K}$ is not definable in $\Cal S_s$ whenever $\log_r(s) \notin \Q$. This consequence is a very special case of a version of Cobham's Theorem for such expansions due to Boigelot, Brusten, and Bruy{\`e}re \cite{BBB2010}. Indeed, if $\log_r(s)\notin \Q$ and $X \subseteq \R$ is both definable in $\Cal S_r$ and weakly recognizable\footnote{See \cite{BBB2010} for a precise definition of weakly recognizable. By Maler and Staiger \cite{MS97} and \cite[Lemma 2.5]{BBB2010}, a subset $X\subseteq \R$ definable in $\Cal S_r$ is weakly recognizable if and only if $X$ is both $F_{\sigma}$ and $G_{\delta}$.}, then $X$ is definable in $\Cal S_s$ if and only if $X$ is definable in $(\R,<,+,\Z)$. See Charlier, Leroy, and Rigo \cite{CLR} for an interesting restatement of this result in terms of graph directed iterated function systems. This suggests that it would be natural to expect that Theorem A holds for a larger class of definable sets in $\Cal S_r$. The obvious extension to weakly recognizable sets fails. To see this, observe that $r^{-\N}$ is definable in $\Cal S_r$ and weakly recognizable, but every subset of $\R$ definable in $(\R,<,+,r^{-\N})$ either has interior or is nowhere dense. The latter statement follows easily from Friedman and Miller \cite[Theorem A]{FM-Sparse} (see \cite[Theorem 7.3]{FHW}). Since $\Cal S_r$ defines sets that are both dense and codense in $(0,1)$, it follows that $W_r$ can not be definable in $(\R,<,+,r^{-\N})$. Nevertheless, we can imagine that Theorem A extends to sets that share the same topological properties as the generalized Cantor sets. As we do not see how our proof generalizes to this setting, we leave this as an open question.

\begin{ques}
Let $r\in \N_{\geq 2}$, and let $C\subseteq \R$ be a nonempty compact set $\emptyset$-definable in $\Cal S_r$ that has neither interior nor isolated points. Is $(\R,<,+,C)=\Cal S_r$?
\end{ques}

\noindent This question has a negative answer\footnote{We thank Erik Walsberg for pointing this out.} when parameters can be used to define $C$. In \cite[Section 7.2]{FHW} a subset $E_S\subseteq \R$ is constructed such that $E_S$ is compact, neither has interior nor isolated points, and $(\R,<,+,E_S)$ does not define a dense and codense subset of $(0,1)$. It is clear from the construction of $E_S$ that $E_S$ is definable in $\Cal S_2$.

\subsection*{Acknowledgements} The authors thank Alexis B\`es, Bernarnd Boigelot, V\'eronique Bruy\`ere, Christian Michaux, and Fran\c{c}oise Point for answering their questions and pointing out references, and Erik Walsberg for helpful comments.

\subsection*{Notations} We will now fix a few conventions and notations. First of all, $\N$ denotes the set of natural numbers including $0$. When we say ``\emph{definable}'', we mean ``\emph{definable possibly with parameters}''.
\noindent Let $r \in \N_{\geq 2}$ and let $\Sigma_r = \{0,\ldots,r-1\}$. Let $x\in \R$. A \textbf{base $r$ expansion} of $x$ is an infinite $\Sigma_r\cup \{\star\}$-word $a_p\cdots a_0\star a_{-1}a_{-2}\cdots$ such that
\begin{equation}\label{eq:1}
z = -a_p r^p + \sum_{i=-\infty}^{p-1} a_{i} r^i
\end{equation}
with $a_p \in \{0,r-1\}$ and $a_{p-1},a_{p-2},\ldots \in \Sigma_r$. We will call the $a_i$'s the \textbf{digits} of the base $r$ expansion of $x$. The digit $a_k$ is \textbf{the digit in the position corresponding to $r^{k}$}. We define $V_r(x,u,k)$ to be the ternary predicate of $\R$ that holds whenever there exists a base $r$ expansion $a_p\cdots a_0\star a_{-1}a_{-2}\cdots$ of $x$ such that $u=r^{n}$ for some $n\in \Z$ and $a_{n} = k$. As is commonly done, we will often identify the word $a_p\cdots a_0\star a_{-1}a_{-2}\cdots$ with the expression in \eqref{eq:1}.\newline

\noindent A number $x\in\R$ can possibly admit two distinct base $r$ expansions. We can use the following to pick out a preferred expansion. Define $U_r(x,u,k)$ to be the ternary predicate of $\R$ that holds whenever $x\in \R$ and there is a base $r$ expansion $a_p\cdots a_0\star a_{-1}a_{-2}\cdots$ of $x$ such that
\begin{itemize}
\item $a_{-i} \neq r-1$ for infinitely many $i \in \N$,
\item $u=r^{-n}$ for some $n\in \Z$, and
\item $a_{n} = k$.
\end{itemize}

\noindent Let $X\subseteq \R$. When we refer to the \textbf{restriction of $V_r$ to $X$}, we actually mean the restriction of $V_r$ to $X\times \R \times \R$. Similarly, the restriction of $U_r$ to $X$ refers to the restriction of $U_r$ to $X \times \R \times \R$.

\begin{fact}\label{fact:vu} Let $\Cal R$ be an expansion of $(\R,<)$ that defines $r^{-\N}$. Let $X\subseteq \R$ be definable in $\Cal R$. Then the following are equivalent:
\begin{enumerate}
\item $\Cal R$ defines the restriction of $V_r$ to $X$;
\item $\Cal R$ defines the restriction of $U_r$ to $X$.
\end{enumerate}
\end{fact}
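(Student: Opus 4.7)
The key observation is that $V_r|_X$ and $U_r|_X$ encode essentially the same combinatorial information: $V_r$ ranges over all base $r$ expansions of $x \in X$, whereas $U_r$ records only the preferred expansion (the one with infinitely many non-$(r-1)$ digits below the radix point). They therefore agree except on those $x$ admitting two distinct base $r$ expansions, which is to say those $x$ having a terminating expansion; in that case there is a unique maximal position $N = N(x)$ of disagreement, at which the preferred digit is exactly one larger than the other, above which both expansions coincide, and below which the preferred digit is $0$ at every position and the other is $r-1$. There is also a minor index-convention shift ($u = r^n$ for $V_r$ versus $u = r^{-n}$ for $U_r$), which amounts to applying the involution $u \mapsto u^{-1}$ on $r^{\Z}$; since $r^{-\N}$ and multiplication by $r$ are both definable, this involution is available in $\mathcal{R}$.

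For (1) $\Rightarrow$ (2), my plan is to use $V_r|_X$ to define the auxiliary predicate $D(x) \equiv \exists u_0\,(V_r(x, u_0, 0) \land V_r(x, u_0, r-1))$, which holds precisely when $x$ admits two base $r$ expansions, and (for $x$ with $D(x)$) to extract $u_0(x) := r^{N(x)}$ as the largest $u_0 \in r^{\Z}$ at which two distinct $k$'s both satisfy $V_r(x, u_0, k)$. Then $U_r|_X(x, u, k)$ is defined by case analysis: if $\neg D(x)$ or if $u^{-1} > u_0(x)$, then $U_r(x, u, k) \iff V_r(x, u^{-1}, k)$; if $D(x)$ and $u^{-1} = u_0(x)$, pick $k$ to be the larger of the two values $k'$ with $V_r(x, u_0(x), k')$; and if $D(x)$ and $u^{-1} < u_0(x)$, set $k = 0$. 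The converse direction (2) $\Rightarrow$ (1) proceeds symmetrically: $D(x)$ is characterized via $U_r|_X$ by the eventual vanishing of the preferred expansion below some position, the second (eventually-$(r-1)$) expansion is reconstructed from the preferred one digit-by-digit (subtract $1$ at $N$, set $r-1$ below $N$, agree above $N$), and $V_r|_X$ is then the union of the preferred digits read off from $U_r$ and the reconstructed ones.

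The main obstacle I anticipate is bookkeeping rather than any deep issue: the case splits must cover all boundary situations (including $x = 0$, $x \in r^{\Z}$, and the leading-digit convention $a_p \in \{0, r-1\}$ in the definition of a base $r$ expansion), and the index-shift involution $u \mapsto u^{-1}$ on $r^{\Z}$ must be realized first-order in $\mathcal{R}$. The latter is manageable because every $u$ appearing in $V_r|_X$ or $U_r|_X$ already lies in $r^{\Z}$, so the needed inverse can be extracted from the order structure inherited from $r^{-\N}$ via the definable multiplication-by-$r$ operation.
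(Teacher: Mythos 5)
Your proposal is, in substance, the paper's proof: both arguments rest on the observation that an $x$ with two base $r$ expansions has them differing in a rigid pattern (agreement above a topmost disagreement position $N$, digits differing by $1$ at $N$, and digits $0$ versus $r-1$ below $N$), locate $N$ definably using only the order on $r^{-\N}$, and then pass between $U_r$ and $V_r$ by a finite case split over $\Sigma_r$; the paper writes out the direction $(2)\Rightarrow(1)$ as an explicit formula and declares the converse similar, whereas you sketch $(1)\Rightarrow(2)$ and declare the converse similar, but the content is the same. The one step of yours that does not survive scrutiny is the treatment of the ``index-convention shift'': you claim the involution $u\mapsto u^{-1}$ on $r^{\Z}$ is available because ``$r^{-\N}$ and multiplication by $r$ are both definable.'' But $\Cal R$ is only assumed to expand $(\R,<)$ --- there is no addition --- so multiplication by $r$ is not definable on $\R$, and even on the discrete set $r^{\Z}$ the reflection $r^{n}\mapsto r^{-n}$ is not first-order definable from the order alone (it would require counting steps from $r^{0}$, which order and successor do not provide); for that matter, the hypothesis only supplies $r^{-\N}$, not $r^{\N}$ or $r^{\Z}$. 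Fortunately this machinery is unnecessary: the mismatch between ``$u=r^{n}$'' in the definition of $V_r$ and ``$u=r^{-n}$'' in the definition of $U_r$ is a notational slip, and both predicates are used with the same position argument $u\in r^{-\N}$ throughout the paper, including in its own proof of this Fact and in the proof of Lemma \ref{lem:zu}. Once the involution is dropped, your construction quantifies only over $r^{-\N}$, compares positions with $<$, and takes finite disjunctions over digits, which is exactly what the paper's formula does.
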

\begin{proof}
Let $x \in \R$. Then $x$ has at most two base $r$ expansions, and if $x$ has two base $r$ expansions, then there is $n \in \N_{>0}$ and $a_p,\cdots,a_0,a_{-1},\cdots,a_{-n}\in \Sigma_r$ such that
\[
a_p \cdots a_0\star a_{-1}\cdots a_{-n} \hbox{ and } a_p\cdots a_0\star a_{-1} \cdots a_{-(n-1)}(a_{-n}-1)(r-1)(r-1)\cdots
\]
are the two base $r$ expansions of $x$. Thus,
\begin{align*}
V_r(x,u,k)\iff U_r(x,u,k)\vee \Big[\exists v\in r&^{-\N}\neg U_r(x,v,0)\\
&\wedge (\forall t\in r^{-\N}(t<v)\rightarrow U_r(x,v,0))\\
&\wedge (u=v\rightarrow U_r(x,u,k+1))\\
&\wedge ((u<v)\rightarrow (k=r-1))\Big].
\end{align*}
Therefore, (2) implies (1). The other direction is similar.
\end{proof}



\section{Proof of Theorem A}

Let $r\in \N_{\geq 3}$, and let $K\subseteq \{1,\ldots,r-2\}$ be nonempty. In this section, we show that $(\R,<,+,C_{r,K})=\Cal S_r$. For ease of notation, we will write $C$ for $C_{r,K}$ in this section. Since $C$ is definable in $\Cal S_r$, it is only left to show that $W_r$ is definable in $(\R,<,+,C)$. To do this, we first show that the definability of $W_r$ follows from the definability of the restriction of $V_r$ to $C$, and then we show that the restriction of $V_r$ to $C$ is in fact definable. Throughout the rest of this section, \emph{``definable''} will mean \emph{``definable in $(\R,<,+,C)$''}. \newline

\noindent Let $k_1,\dots k_l\in K$ and $m_1,\dots,m_l\in \Sigma_{r}\setminus K$ be such that
\begin{itemize}
\item $k_1<m_1<k_2<m_2<\dots<k_l<m_l$ and
\item $K = \N \cap \bigcup_{i=1}^l [k_i,m_i)$.
\end{itemize}
Set $M := \{ m_1,\ldots,m_l\}$.\newline

\noindent Recall that $C$ is the set of elements in $[0,1]$ that admit a base $r$ representation omitting the digits in $K$, and that $r-1\notin K$. Therefore, for every subset $X\subseteq \N_{>0}$, there is some $c \in C$ whose base $r$ representation is
\[
c = \sum_{n \in X} (r-1)r^{-n}.
\]
From this observation, we deduce directly that for every $x\in [0,1]$ there are $c_1,\dots,c_{r-1} \in C$ such that
\[
x = \frac{1}{r-1}(c_1 + \dots + c_{r-1}).
\]
This is an analogue of the standard fact that Minkowski sum of the middle-thirds Cantor set with itself is the interval $[0,2]$. Define $E\subseteq C^{r-1}$ to be the set of all tuples $(c_1,\dots,c_{r-1})$ such that each $c_i$ admits a base $r$ expansion in which only the digits $0$ and $r-1$ occur. Let $h\colon E\times r^{-\N_{>0}}\to \{0,r-1\}$ be the function that maps the tuple $(c_1,\dots,c_{r-1},r^{-n})$ to the cardinality of the set $\{ i \in \{0,\dots,r-1\} \ : \ V_r(c_i,r^{-n},r-1)\}$. Note that both $E$ and $h$ are definable if the restriction $V_r$ to $C$ is definable.

\begin{lem}\label{lem:cexpansion} Let $x\in [0,1)$, $n\in \N_{\geq 1}
,$ and $k\in \{0,\dots,r-1\}$. Then $V_r(x,r^{-n},k)$ holds if and only if
there is $c=(c_1,\dots,c_{r-1})\in E$ such that
\begin{itemize}
\item $x = \frac{1}{r-1}(c_1 + \dots + c_{r-1})$, and
\item $h(c,r^{-n})=k.$
\end{itemize}
\end{lem}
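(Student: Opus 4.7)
The plan is to prove both directions by a direct digit-by-digit calculation, exploiting the fact that each $c_i \in E$ admits a base $r$ expansion using only the digits $0$ and $r-1$.

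For the $(\Leftarrow)$ direction, given $(c_1, \ldots, c_{r-1}) \in E$ with $x = \frac{1}{r-1}\sum_i c_i$ and $h(c, r^{-n}) = k$, write each $c_i = \sum_{m \geq 1} a_{i,m} r^{-m}$ with $a_{i,m} \in \{0, r-1\}$. Adding column-wise yields
\[
(r-1)\, x \;=\; \sum_{m \geq 1} \Bigl(\sum_{i=1}^{r-1} a_{i,m}\Bigr) r^{-m} \;=\; \sum_{m \geq 1} (r-1)\, s_m\, r^{-m},
\]
where $s_m = |\{i : a_{i,m} = r-1\}|$. Dividing by $r-1$ gives a legitimate base $r$ expansion $x = \sum_m s_m r^{-m}$ (no carries occur since $s_m \in \{0,\ldots,r-1\}$), so $V_r(x, r^{-m}, s_m)$ holds for each $m$. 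Identifying $s_n$ with $h(c, r^{-n}) = k$ then yields $V_r(x, r^{-n}, k)$.

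For the $(\Rightarrow)$ direction, fix a base $r$ expansion $x = \sum_{m \geq 1} k_m r^{-m}$ witnessing $V_r(x,r^{-n},k)$, so $k_n = k$ (such an expansion exists since $x \in [0,1)$), and for each $i \in \{1, \ldots, r-1\}$ set $c_i := \sum_{m \geq 1} a_{i,m} r^{-m}$ with $a_{i,m} = r-1$ if $i \leq k_m$ and $a_{i,m} = 0$ otherwise. Then each $c_i \in [0,1]$ has a $\{0, r-1\}$-valued base $r$ expansion; since $K \subseteq \{1, \ldots, r-2\}$ we have $c_i \in C$, and hence $(c_1, \ldots, c_{r-1}) \in E$. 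Direct summation gives $\sum_i c_i = (r-1)x$, and at each position $r^{-m}$ exactly $k_m$ of the $c_i$'s carry digit $r-1$, so $h(c, r^{-m}) = k_m$; in particular $h(c, r^{-n}) = k$.

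The main technical point in both directions is the matching of the column counts from the $\{0, r-1\}$-expansions with the values $h(c, r^{-m})$ defined via $V_r$. For $c_i$ having a unique base $r$ expansion this identification is immediate, but when $c_i$ is an $r$-adic rational admitting two base $r$ expansions, some care is required, because $V_r$ quantifies existentially over all base $r$ expansions of $c_i$, not just the $\{0, r-1\}$-one. A short case analysis on whether the $\{0, r-1\}$-expansion of $c_i$ terminates or ends in all $(r-1)$'s clears this up, leaving the argument as a careful unpacking of the definitions of $E$, $h$, and $V_r$.
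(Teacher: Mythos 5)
Your construction is the same as the paper's: in the forward direction you give $c_i$ the digit $r-1$ exactly at the positions where the chosen expansion of $x$ has digit $\geq i$ (the paper writes $c_j=\sum_{a_i\geq j}(r-1)r^{-i}$), and in the reverse direction you add the $\{0,r-1\}$-expansions column by column. The place where you diverge is your closing paragraph, and that is exactly where the difficulty sits. You say the mismatch between the column count $s_n$ and $h(c,r^{-n})$ at $r$-adic rationals is cleared up by a short case analysis; it is not. If the $\{0,r-1\}$-expansion of some $c_i$ terminates at a position $N<n$, its other base $r$ expansion ends in $(r-1)(r-1)\cdots$, so $V_r(c_i,r^{-n},r-1)$ holds and $h$ counts $c_i$ at position $n$ even though $s_n$ does not. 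Concretely, take $r=3$, $K=\{1\}$, $c_1=2/3=0.2\overline{0}=0.1\overline{2}$ and $c_2=1/4=0.\overline{02}$. Then $x=\tfrac{1}{2}(c_1+c_2)=11/24$ has the unique base $3$ expansion $0.11\overline{01}$, so $V_3(x,3^{-3},1)$ fails; yet $(c_1,c_2)\in E$ and $h\big((c_1,c_2),3^{-3}\big)=1$, because $V_3(2/3,3^{-3},2)$ holds via the expansion $0.1\overline{2}$. So the identification $s_n=h(c,r^{-n})$, and with it the right-to-left direction of the lemma as literally stated, is false.

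To be fair, the paper's own proof commits the same error silently: its displayed identity $c_i=\sum_{m:\,V_r(c_i,r^{-m},r-1)}(r-1)r^{-m}$ already fails for $c_i=2/3$, and its claim $h(c_1,\dots,c_{r-1},r^{-n})=a_n$ in the forward direction fails for the same reason. The statement and both proofs are repaired by defining $h$ to count digits of the $\{0,r-1\}$-expansion itself rather than via $V_r$: for $r\geq 3$ that expansion is unique (the two expansions of an $r$-adic rational differ by $1$ in a digit, so they cannot both be $\{0,r-1\}$-valued), and it is definable from the restriction of $V_r$ to $C$ together with $r^{-\N}$, so the intended application to Theorem A survives. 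But as written, your argument, like the paper's, does not prove the lemma with $h$ as actually defined; the "short case analysis" you defer reveals a genuine failure rather than resolving one.
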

\begin{proof} Suppose that $x= \sum_{i=1}^{\infty} a_i r^{-i}$ is some base $r$ expansion of $x$. For $j\in\{1,\dots,r-1\},$ set
\[
c_j := \sum_{\substack{i \in \N_{>0},\\ a_i\geq j}} (r-1) r^{-i}.
\]
Then $(c_1,\dots,c_{r-1}) \in E$, and $x = \frac{1}{r-1}(c_1 + \dots + c_{r-1})$. Moreover, $h(c_1,\dots,c_{r-1},r^{-n})=a_n$.\newline

\noindent Suppose next that there is $c=(c_1,\dots,c_{r-1})\in E$ such that $x = \frac{1}{r-1}(c_1 + \dots + c_{r-1})$. Then we get the following base $r$ expansion of $x$:
\begin{align*}
x &= \frac{1}{r-1}\sum_{i=1}^{r-1}c_i\\
&= \frac{1}{r-1} \sum_{i=1}^{r-1} \sum_{\substack{i \in \N_{>0},\\ V_r(c_i,r^{-i},r-1)}} (r-1) r^{-i}\\
&=  \sum_{i \in \N_{>0}} h(c,r^{-i}) r^{-i}.
\end{align*}
Thus, $V(x,r^{-n},h(c,r^{-n}))$.
\end{proof}
\noindent By Lemma \ref{lem:cexpansion}, the definability of $W_r$ follows from the definability of the restriction of $V_r$ to $C$. To establish the definability of the restriction of $V_r$ to $C$, we will rely heavily on the regularity of the complementary intervals of $C$.

\begin{defn}A \textbf{complementary interval} of $C$ is an open interval $\big(c_1,c_2\big)\subseteq [0,1]$ such that $c_1,c_2 \in C$ but $\big(c_1,c_2\big) \cap C = \emptyset$.
\end{defn}

\noindent  For example, $(\tfrac19,\tfrac29)$ is a complementary interval of the middle-thirds Cantor set. For $a \in \R_{>0}$, denote by $R_a$ the set of right endpoints of complementary intervals of $C$ that are of length at least $a$. Observe that the set $R:=\{ (a,x) : x \in R_a\}$ is definable. Let $D$ be the set of all right endpoints of $C$. As $D=\bigcup_{a \in \R_{>0}} R_a$, $D$ is definable. Moreover, the set
\[
L :=\{ z \in \R \ : \ z \hbox{ is the length of a complementary interval of $C$ in $[0,1]$} \}
\]
is definable.

\begin{lem}\label{lem:R} Let $d\in (0,1]$ and let $n \in \N$. Then, the following are equivalent:
\begin{enumerate}
\item $d \in R_{r^{-n}}$;
\item there are $b_1,\dots,b_{n-1} \in \Sigma_r \setminus K$ and $b_n \in M$ such that $d = \sum_{i=1}^n b_i r^{-i}$.
\end{enumerate}
\end{lem}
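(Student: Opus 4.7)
The plan is to analyze both implications via base $r$ expansions, using two preliminary facts: $\{0, r-1\} \subseteq \Sigma_r \setminus K$ since $K \subseteq \{1, \ldots, r-2\}$, and $k_j - 1 \notin K$ for each $j$, because the strict integer interlacing $k_1 < m_1 < \ldots < k_l < m_l$ prevents $k_j - 1$ from lying in any $[k_{j'}, m_{j'})$.

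For $(2) \Rightarrow (1)$: given $d = \sum_{i=1}^n b_i r^{-i}$ with $b_i \in \Sigma_r \setminus K$ for $i < n$ and $b_n = m_j \in M$, I would exhibit $(c,d)$ as a complementary interval of length $(m_j - k_j) r^{-n} \geq r^{-n}$, where $c := \sum_{i=1}^{n-1} b_i r^{-i} + k_j r^{-n}$. Then $d \in C$ via its given expansion, and $c \in C$ via its infinite alternate expansion $(b_1, \ldots, b_{n-1}, k_j - 1, r-1, r-1, \ldots)$, which is $C$-valid by the preliminary facts. To show $(c,d) \cap C = \emptyset$, I would take any $e \in C$ with $C$-valid expansion $(e_1, e_2, \ldots)$: for $e \in (c,d)$, a lex comparison to $(b_1, \ldots, b_n, 0, 0, \ldots)$ forces agreement through position $n - 1$, and then $e_n \in \Sigma_r \setminus K$ with $k_j \leq e_n \leq m_j - 1$, contradicting $[k_j, m_j) \cap \Z \subseteq K$.

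For $(1) \Rightarrow (2)$: suppose $d$ is a right endpoint of a complementary interval of $C$ of length $\geq r^{-n}$. First, $d$ must have a finite $C$-expansion: an infinite $C$-expansion $d = \sum_{i=1}^\infty a_i r^{-i}$ with infinitely many $a_i \neq 0$ would give truncations $d_N \in C$ with $d_N < d$ converging to $d$, contradicting $d$ being a right endpoint. Writing $d = \sum_{i=1}^N a_i r^{-i}$ with $a_N \neq 0$, I would show $a_N \in M$: otherwise $a_N - 1 \notin K$ (by reasoning analogous to the preliminary facts), and truncations of $(a_1, \ldots, a_{N-1}, a_N - 1, r-1, r-1, \ldots)$ approach $d$ from below inside $C$. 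With $a_N = m_j$, the construction from the forward direction identifies the gap length as $(m_j - k_j) r^{-N}$; the hypothesis $(m_j - k_j) r^{-N} \geq r^{-n}$ combined with $m_j - k_j \leq r - 2 < r$ yields $N \leq n$, which produces the required representation.

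The main obstacle I anticipate is the lex analysis in the forward direction showing $(c, d) \cap C = \emptyset$: one must thread carefully through the fact that $c$ has two base $r$ expansions — the finite one $(b_1, \ldots, b_{n-1}, k_j, 0, 0, \ldots)$ contains the $K$-digit $k_j$ and is not $C$-valid, while the infinite one above is — and ensure the comparison handles both the lower bound from $c$ and the upper bound from $d$ simultaneously, ruling out any intruder expansion that disagrees at some position $i < n$.
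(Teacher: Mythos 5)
Your direction $(2)\Rightarrow(1)$ is sound and carries essentially the same content as the paper's argument combined with its Corollary \ref{cor:length}: the paper rules out a point $c\in C$ with $0<d-c<r^{-n}$ by passing to a point with a unique base $r$ expansion and reading off a forbidden digit, whereas you exhibit the complementary interval $\left(\sum_{i<n}b_ir^{-i}+k_jr^{-n},\,d\right)$ explicitly; both work, and your two preliminary facts ($0,r-1\in\Sigma_r\setminus K$ and $k_j-1\notin K$) are correct and are exactly what is needed to certify the left endpoint's membership in $C$ via the expansion ending in $k_j-1,r-1,r-1,\dots$.

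The genuine gap is the final sentence of your direction $(1)\Rightarrow(2)$. You correctly derive that $d=\sum_{i=1}^{N}a_ir^{-i}$ with $a_1,\dots,a_{N-1}\in\Sigma_r\setminus K$ and $a_N\in M$, and that the gap length $(m_j-k_j)r^{-N}\geq r^{-n}$ forces $N\leq n$; but when $N<n$ this does not ``produce the required representation'': padding with zeros gives $b_n=0$, and $0\notin M$ since every $m_j>k_j\geq 1$. This step cannot be repaired, because the implication $(1)\Rightarrow(2)$ is false as stated. For the middle-thirds set ($r=3$, $K=\{1\}$, $M=\{2\}$), take $d=2/3$ and $n=2$: the complementary interval $(1/3,2/3)$ has length $1/3\geq 3^{-2}$, so $d\in R_{3^{-2}}$, yet $2/3=b_13^{-1}+b_23^{-2}$ with $b_2\in M$ forces $b_1=4/3\notin\Sigma_3$. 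What you actually proved --- there is some $N\leq n$ with $d=\sum_{i=1}^{N}a_ir^{-i}$ and $a_N\in M$ --- is the correct form of the lemma. The paper's own proof contains the mirror image of your gap: it truncates at position $n$ and, to show $b_n\in M$, forms $c'=(b_n-1)r^{-n}+(r-1)r^{-(n+1)}+\sum_{i<n}b_ir^{-i}$ and asserts $c'\in C$; this tacitly assumes $b_n\geq 1$, and in the example above ($b_2=0$) one gets $c'=17/27=0.122_3\notin C$, so no contradiction is obtained. In short: your proof is more careful than the paper's up to the last line, and the honest conclusion of your argument is that the statement needs ``there is $N\leq n$'' in place of the fixed index $n$.
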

\begin{proof}
Suppose first that $d = \sum_{i=1}^n b_i r^{-i}$ with $b_i \in\Sigma_r\setminus K$ for $i<n$ and $b_n \in M$. Suppose towards a contradiction that there is $c \in C$ such that $0<d-c<r^{-n}$. We can assume without loss of generality that $c$ has a unique base $r$ expansion $\sum_{i=1}^{\infty} b_i' r^{-i}$. Our assumption that $0<d-c<r^{-n}$ implies $c-r^{-n}<d<c$, or in other words,
\[
 (b_{n}-1)r^{-n} + \sum_{i=1}^{n-1} b_i r^{-i} < \sum_{i=1}^\infty b_i'r^{-i} <  \sum_{i=1}^n b_i r^{-i}.
\]
Thus $b_i'=b_i$ for $i<n$, and $b_n'=b_{n}-1$. Since $b_n \in M$, $b_n'\in K$. Since $c$ has only one base $r$ expansion, $c\notin C$. This is a contradiction.\newline
Suppose next that $d\in R_{r^{-n}}$. Because $d\in C$, we can write $d$ as $\sum_{i=1}^{\infty} b_i r^{-i}$ with each $b_i \in\Sigma_r \setminus K$. Then the truncation $d_n:=\sum_{i=1}^{n} b_i r^{-i}$ is in $C$, with $0 \leq d - d_n \leq r^{-n}$. Since $d$ is the right endpoint of a complementary interval of length $r^{-n}$, it follows that either $d=d_n$ or $d=d_n+r^{-n}$. But it cannot be the latter, for if $d=d_n+r^{-n}$, then $c=d_n+(r-1)d^{-(n+1)}\in C$ with $0<d-c<r^{-n}$, contradicting the assumption that $d\in R_{r^{-n}}$. Thus, $d=d_n$.
It is left to show that $b_n \in M$. Suppose towards a contradiction that $b_n-1 \notin K$. Then
\[
c'=(b_n-1)r^{-n} + (r-1)r^{-(n+1)} + \sum_{i=1}^{n-1} b_i r^{-i} \in C,
\]
and $d-c'<r^{-n}$, again contradicting the assumption that $d\in R_{r^{-n}}$. Thus, $b_n \in M$, and $d$ has the desired form..
\end{proof}

\begin{cor}\label{cor:length} Let $d\in D$, and let $n\in \N_{>0}$, $b_1,\dots,b_{n-1} \in \Sigma_r \setminus K$, $m_j \in M$ be such that $d = \sum_{i=1}^{n-1} b_i r^{-i}+m_jr^{-n}$.  Then the length of the complementary interval with right endpoint $d$ is $(m_j-k_j)r^{-n}$.
\end{cor}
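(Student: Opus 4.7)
The plan is to identify the left endpoint of the complementary interval with right endpoint $d$ by direct computation. Set
\[
c := d - (m_j - k_j) r^{-n} = \sum_{i=1}^{n-1} b_i r^{-i} + k_j r^{-n}.
\]
I will establish (a) $c \in C$ and (b) $(c, d) \cap C = \emptyset$; together these show that $(c,d)$ is the complementary interval with right endpoint $d$, so its length is $d - c = (m_j - k_j) r^{-n}$, as claimed.

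For (a), the displayed expression for $c$ uses the forbidden digit $k_j \in K$, so I rewrite it using the geometric identity $r^{-n} = \sum_{i > n}(r-1) r^{-i}$:
\[
c = \sum_{i=1}^{n-1} b_i r^{-i} + (k_j - 1) r^{-n} + \sum_{i=n+1}^{\infty} (r-1) r^{-i}.
\]
Every digit appearing here lies in $\Sigma_r \setminus K$: indeed $r - 1 \notin K$ since $K \subseteq \{1, \ldots, r-2\}$, and $k_j - 1 \notin K$ because either $j = 1$ (so $k_j - 1 < k_1 = \min K$) or $j \geq 2$ (so $k_j - 1 \in [m_{j-1}, k_j)$, which is disjoint from $K$ by the ordering $k_1 < m_1 < \cdots < k_l < m_l$). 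Hence $c \in C$.

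For (b), suppose towards a contradiction that $c' \in C \cap (c, d)$, and fix an expansion $c' = \sum_{i \geq 1} a_i r^{-i}$ with every $a_i \in \Sigma_r \setminus K$ (which exists because $c' \in C$). Using the sharp bound $\sum_{i > k}(r-1) r^{-i} = r^{-k}$, standard base-$r$ comparison shows the first $n-1$ digits of $c'$ must coincide with $b_1, \ldots, b_{n-1}$: at the least $k < n$ with $a_k \neq b_k$, the case $a_k < b_k$ forces $c' \leq \sum_{i \leq k} b_i r^{-i} < c$, while the case $a_k > b_k$ forces $c' \geq \sum_{i < k} b_i r^{-i} + (b_k + 1) r^{-k} > d$ (using that $\sum_{i=k+1}^{n-1} b_i r^{-i} + m_j r^{-n} \leq r^{-k} - r^{-n} < r^{-k}$), both contradictions. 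Given $a_i = b_i$ for $i < n$, the membership $c' \in (c, d)$ translates to $k_j < a_n + \sum_{i > n} a_i r^{-(i-n)} < m_j$, forcing $a_n \in \{k_j, \ldots, m_j - 1\} \subseteq K$, contradicting $a_n \in \Sigma_r \setminus K$. The main subtlety is the possibility of two base-$r$ expansions for $c'$, which is handled by always working with an expansion whose digits avoid $K$ and exploiting the sharpness of the $(r-1)$-tail bound to close both comparison cases tightly.
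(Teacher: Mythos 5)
Your proof is correct and takes essentially the same route as the paper: the paper's proof simply asserts that the complementary interval with right endpoint $d$ is $\bigl(\sum_{i=1}^{n-1} b_i r^{-i} + k_j r^{-n},\ \sum_{i=1}^{n-1} b_i r^{-i} + m_j r^{-n}\bigr)$ and declares this "can be checked easily," while you identify the same interval and actually carry out the check. Your verification is sound on all points -- the $(r-1)$-tail rewriting showing $k_j-1\notin K$ (using the block structure $K=\N\cap\bigcup_i[k_i,m_i)$) puts the left endpoint in $C$, and the digit-comparison argument correctly forces any $c'\in(c,d)\cap C$ to have $n$-th digit in $\{k_j,\dots,m_j-1\}\subseteq K$.
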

\begin{proof}
It can be checked easily that the complementary interval with right endpoint $d$ is exactly the interval
\[
\left(\left(\sum_{i=1}^{n-1} b_i r^{-i}\right)+ k_j r^{-n},\left(\sum_{i=1}^{n-1} b_i r^{-i}\right) + m_j r^{-n}\right).
\]
The length of this interval is $(m_j-k_j)r^{-n}$.
\end{proof}

\noindent The following description of $L$ follows immediately from Corollary \ref{cor:length}.

\begin{cor}\label{cor:L} The set $L$ is equal to
\[
\{ (m_i-k_i) r^{-n} \ : \ i \in \{1,\dots,l\}, n \in \N_{>0}\} = \bigcup_{i=1}^l (m_i-k_i) r^{-\N_{> 0}}.
\]
\qed
\end{cor}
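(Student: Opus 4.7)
My plan is to prove Corollary \ref{cor:L} by establishing the two inclusions separately, both reducing to a straightforward combination of Lemma \ref{lem:R} and Corollary \ref{cor:length}.

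First I will show $\bigcup_{i=1}^l (m_i - k_i) r^{-\N_{>0}} \subseteq L$. Given $i \in \{1, \ldots, l\}$ and $n \in \N_{>0}$, I will exhibit a concrete complementary interval of the required length by choosing its right endpoint to be $d := m_i r^{-n}$. This $d$ admits the representation $d = \sum_{i'=1}^{n-1} 0 \cdot r^{-i'} + m_i \cdot r^{-n}$, with $0 \in \Sigma_r \setminus K$ (since $K \subseteq \{1, \ldots, r-2\}$) and $m_i \in M$; so condition (2) of Lemma \ref{lem:R} holds, and hence $d \in R_{r^{-n}} \subseteq D$. Corollary \ref{cor:length} applied to this representation then yields length $(m_i - k_i) r^{-n}$, establishing membership in $L$.

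Next I will show $L \subseteq \bigcup_{i=1}^l (m_i - k_i) r^{-\N_{>0}}$. Let $\ell \in L$ and let $d \in D$ be the right endpoint of a complementary interval of length $\ell$. Choosing $n \in \N_{>0}$ with $r^{-n} \leq \ell$ forces $d \in R_{r^{-n}}$, so Lemma \ref{lem:R} supplies a representation $d = \sum_{i=1}^{n-1} b_i r^{-i} + m_j r^{-n}$ with each $b_i \in \Sigma_r \setminus K$ and some $m_j \in M$. Feeding this representation into Corollary \ref{cor:length} forces $\ell = (m_j - k_j) r^{-n}$, placing $\ell$ in the claimed union.

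I expect no significant obstacle here, as both directions are direct consequences of the two preceding results, which between them describe the existence of the required representation of a right endpoint and the formula for the associated complementary-interval length. The only minor check is that in the second direction, the $n$ chosen to secure $r^{-n} \leq \ell$ actually corresponds to a representation whose computed length matches $\ell$; this is automatic because Corollary \ref{cor:length} asserts the length formula for \emph{every} valid representation of any $d \in D$. This conforms to the author's remark that the corollary ``follows immediately.''
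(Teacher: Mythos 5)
Your proof is correct and takes essentially the same route as the paper, which simply asserts that the description of $L$ ``follows immediately from Corollary \ref{cor:length}''; you have merely spelled out the two inclusions, each a direct application of Lemma \ref{lem:R} together with Corollary \ref{cor:length}. No gaps.
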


\begin{cor}\label{cor:rndef} The set $r^{-\N}$ is definable.
\end{cor}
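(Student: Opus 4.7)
My plan is to extract $r^{-\N}$ from the definable set $L$ provided by Corollary \ref{cor:L}. Write $L = \bigcup_{i=1}^{l} a_i \cdot r^{-\N_{>0}}$ with $a_i := m_i - k_i \in \{1,\ldots,r-1\}$, and set $a_{\max} := \max_i a_i$. The supremum $\sup L = a_{\max}\cdot r^{-1}$ is attained (at $a_{\max} r^{-1} \in L$), so $r^{-1}$ is $\emptyset$-definable as $(\sup L)/a_{\max}$ via scalar division by the fixed integer $a_{\max}$; equivalently, the relation $y = r^{-1}x \iff ry = x$ furnishes a definable ``division by $r$''.

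If $l = 1$ the argument is immediate: here $L = a_1 \cdot r^{-\N_{>0}}$ is a single geometric progression, so $r^{-\N_{>0}} = \{x : a_1 x \in L\}$ and $r^{-\N} = r^{-\N_{>0}} \cup \{1\}$ are directly definable. A first attempt in the general case $l \geq 2$ is to single out the canonical sub-progression $a_{\max}\cdot r^{-\N_{>0}} \subseteq L$ as the ``layer tops'' (the maximum of $L$ within each decade $[r^{-n}, r^{-n+1})$) and divide by $a_{\max}$; however, this purely algebraic characterization can admit spurious candidates when $r$ is composite and the $a_i$'s interact multiplicatively with $r$ (for instance $r = 9$ with $\{a_i\} = \{1,3\}$, in which case $L$ collapses to a single geometric progression of ratio $1/3$ and $r^{-\N}$ sits as every other element).

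To rule out such spurious candidates, I plan to combine $L$ with the self-similar structure of $C$, namely the identity $C \cap [0,r^{-n}] = r^{-n}\cdot C$, which holds precisely on the orbit $\{r^{-n} : n \geq 0\}$. Since the scaling $x \mapsto r^{-1}x$ is already definable, this identity can be turned into a first-order condition on $y \in (0,1]$ asserting that $C \cap [0,y]$ matches the scaled copy of $C$ expected at scale $y$; paired with the layer-top condition derived from $L$, this isolates exactly $r^{-\N}$. The main obstacle in executing the plan is that scalar multiplication by a variable $y$ is not available in $(\R,<,+)$, so the self-similarity must be encoded indirectly, by chaining the definable map $x \mapsto rx$ with constraints from $L$ and the global shape of $C$ rather than writing ``$yC$'' literally; this is where the careful first-order bookkeeping enters.
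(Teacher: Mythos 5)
The easy parts of your proposal are fine: $r^{-1}$ is indeed $\emptyset$-definable as the solution of $a_{\max}y=\max L$, and in the case $l=1$ the set $L$ is a single geometric progression, so $r^{-\N}$ falls out immediately. The problem is that for $l\geq 2$ your argument stops exactly where the work begins. Your ``layer top'' characterization quantifies over the decades $[r^{-n},r^{-n+1})$, which presupposes the very set $r^{-\N}$ you are trying to define, and an intrinsic replacement (e.g.\ comparing ratios of consecutive elements of $L$) can fail when $a_{\max}/a_{\min}$ is large relative to $r$. Your proposed repair via the self-similarity $C\cap[0,y]=yC$ is only a plan: as you yourself note, $yC$ involves scalar multiplication by a variable and so cannot be written down in $(\R,<,+,C)$, and the promised encoding ``by chaining the definable map $x\mapsto rx$ with constraints from $L$'' is never exhibited. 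This is not a routine bookkeeping issue: having the single definable map $x\mapsto x/r$ does not give you its orbit $r^{-\N}$, since first-order logic cannot take transitive closures, so some genuinely new idea is needed at precisely this point. In addition, the claim that the self-similarity identity holds \emph{only} for $y\in r^{-\N}$ is asserted without proof.

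The paper avoids all of this by not working with the set $L$ of gap lengths alone. It uses the richer definable data consisting of the set $D$ of right endpoints of complementary intervals together with the function $f\colon D\to L$ assigning to each endpoint the length of its gap (both definable from the relation $R=\{(a,x):x\in R_a\}$). A minimality condition formulated in terms of $<$, $D$, and $f$ then carves out a single geometric progression $m_jr^{-\N_{>0}}$ \emph{inside $D$} --- using Lemma \ref{lem:R} and Corollary \ref{cor:length} to identify which endpoint-length pairs can occur --- and $r^{-\N}$ is obtained by dividing by the fixed integer $m_j$. If you want to salvage your approach, you should replace the appeal to self-similarity of $C$ by a concrete first-order condition on pairs $(d,f(d))$ with $d\in D$, which is exactly the route the paper takes.
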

\begin{proof}
Define $v \in \Sigma_r$ by
\[
v := \min_{i\in \{1,\dots,l\}} (m_i - k_i).
\]
Let $j \in \{1,\dots,l\}$ be minimal such that $m_j-k_j=v$. Let $f\colon D \to L$ be the function that maps $d\in D$ to the length of the complementary interval with right endpoint $d$. Let $D'$ be the set of all $d\in D$ such that there is no $e\in D$ with $e<d$ and $f(e) \leq f(d)$. Observe that both $f$ and $D'$ are definable. It follows from Lemma \ref{lem:R} and Corollary \ref{cor:length} that
\[
D' = \{ m_j r^{-n} \ : \ n \in \N_{>0}\} = m_j r^{-\N_{>0}}.
\]
The definability of $r^{-\N}$ follows.
\end{proof}

\noindent We now use the definability of $r^{-\N}$ to prove the definability of the restriction of $W_r$ to $C$.

\begin{defn} Let $\mu \colon r^{-\N}\times C\to C$ map $(s,c)$ to $\max (R_s \cap (-\infty,c])$ if this maximum exists, and to $0$ otherwise.
\end{defn}

\noindent Observe that $\mu$ is definable, as both $R$ and $r^{-\N}$ are. Loosely speaking, $\mu(r^{-n},c)$ is the best approximation of $c$ from the left by a right endpoint of a complementary interval of length at most $r^{-n}$.
We now establish the precise connection between the function $\mu$ and the base $r$ expansion of elements of $C$.

\begin{lem}\label{lem:descmu} Let $n\in \N$, and let $c = \sum_{i=1}^{\infty} b_i r^{-i}$ be such that $b_i \in \Sigma_r\setminus K$. Then
\[
\mu(r^{-n},c) = \sum_{i=1}^{n-1} b_i r^{-i} + \max \left(M \cap (-\infty, b_n]\right)  r^{-n}.
\]
\end{lem}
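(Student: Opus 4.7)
The plan is to verify the identity by showing that $\tilde{d} := \sum_{i=1}^{n-1} b_i r^{-i} + m \cdot r^{-n}$, where $m := \max(M \cap (-\infty, b_n])$, is the largest element of $R_{r^{-n}}$ that is $\leq c$; I will assume $M \cap (-\infty, b_n] \neq \emptyset$ so that $m$ is defined. First I would check $\tilde{d} \in R_{r^{-n}}$ using Lemma \ref{lem:R}: the prefix digits $b_1, \ldots, b_{n-1}$ all lie in $\Sigma_r \setminus K$ (by our chosen representation of $c$), and $m \in M$ by construction. The inequality $\tilde{d} \leq c$ is then immediate from $m \leq b_n$ together with the nonnegativity of the tail $c - \sum_{i=1}^{n} b_i r^{-i}$.

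The core of the argument is to show that no $d \in R_{r^{-n}}$ can lie strictly between $\tilde{d}$ and $c$. Suppose for contradiction such a $d = \sum_{i=1}^{n-1} b'_i r^{-i} + b'_n r^{-n}$ exists, with $b'_i \in \Sigma_r \setminus K$ and $b'_n \in M$ (by Lemma \ref{lem:R}). Let $j$ be the smallest index at which the digit sequences of $d$ and $\tilde{d}$ disagree, viewing $\tilde{d}$ as the word $b_1 b_2 \cdots b_{n-1} m$. If $j < n$, a standard geometric-tail estimate (digits bounded above by $r-1$ and the fact that the tail of a summation $\sum_{i>j} (r-1) r^{-i}$ equals $r^{-j}$) yields $d < \tilde{d}$ when $b'_j < b_j$ and $d \geq c$ when $b'_j > b_j$. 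If $j = n$, then $b'_n > m$, and the maximality of $m$ in $M \cap (-\infty, b_n]$, combined with $b'_n \in M$, forces $b'_n > b_n$, so $d \geq \sum_{i<n} b_i r^{-i} + (b_n + 1) r^{-n} \geq c$.

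The hard part will be ruling out the boundary equalities, where the tail estimates only deliver $d \geq c$ rather than $d > c$. When $j < n$ and $b'_j > b_j$, equality $d = c$ forces all tail digits $b_{j+1}, b_{j+2}, \ldots$ of $c$ to equal $r-1$ and forces $b'_{j+1} = \cdots = b'_{n-1} = b'_n = 0$; this contradicts $b'_n \in M$, since every $m_i \geq k_i + 1 \geq 2$ and hence $0 \notin M$. When $j = n$ and $b'_n = b_n + 1$, equality forces $b_n = b'_n - 1$ with $b'_n = m_i$ for some $i$, so $b_n \in [k_i, m_i) \subseteq K$, contradicting $b_n \in \Sigma_r \setminus K$. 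Together with the two opening steps, these structural obstructions eliminate every case and establish the stated formula.
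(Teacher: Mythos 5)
Your proof is correct and takes essentially the same approach as the paper's: both verify via Lemma \ref{lem:R} that $\sum_{i=1}^{n-1}b_ir^{-i}+\max\left(M\cap(-\infty,b_n]\right)r^{-n}$ lies in $R_{r^{-n}}$ and is at most $c$, and then use Lemma \ref{lem:R} again together with the maximality of $\max\left(M\cap(-\infty,b_n]\right)$ to show that no element of $R_{r^{-n}}$ lies between this point and $c$. If anything, your explicit treatment of the boundary equalities is slightly more careful than the paper's argument, which only checks that the open interval $(d,c)$ misses $R_{r^{-n}}$ even though $\mu$ is defined as a maximum over the half-closed interval $(-\infty,c]$.
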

\begin{proof}
Set $d:= \sum_{i=1}^{n-1} b_i r^{-i} + \max \left(M \cap (-\infty, b_n]\right)  r^{-n}$. By Lemma \ref{lem:R}, $d\in R_{r^{-n}}$. It is left to show that $(d,c)\cap R_{r^{-n}}$ is empty.
Suppose towards a contradiction that there is $e\in (d,c)\cap R_{r^{-n}}$. Then
$0<c-e < c-d < r^{-(n-1)}$, so by Lemma \ref{lem:R}, there exists $a\in M$ with
\[
e = \sum_{i=1}^{n-1} b_i r^{-i} + a r^{-n}.
\]
Thus $\max \left(M \cap (-\infty, b_n]\right) < a \leq b_n$, and therefore $a \notin M$.
This is a contradiction.
\end{proof}

\noindent In the following, we will show that given an element $c \in C$, we just need to know $\mu(r^{-n},c)$ and $\mu(r^{-(n-1)},c)$ in order to recover the digit in the position corresponding to $r^{-n}$ in a preferred base $r$ expansion of $c$. We now define a set $Z\subseteq \R^3$ that formalizes this idea.

\begin{defn}
Define $Z \subseteq \R^3$ to be the set of all triples $(c,s,d)$ such that $c \in C$, $s \in r^{-\N_{>0}}$, and
\begin{align*}
\bigvee_{i=0}^{r-1} \bigvee_{j=0}^{r-1}& \Big( d=j \ \wedge \ \mu(rs,c) + i rs \leq \mu(s,c)<\mu(rs,c)+(i+1)rs\\
& \wedge \ \mu(rs,c) + irs + js \leq c < \mu(rs,c)+irs + (j+1) s\Big).
\end{align*}
\end{defn}

\begin{lem}\label{lem:zu} The set $Z$ is equal to $U_r \cap \left( C \times \R^2 \right)$.
\end{lem}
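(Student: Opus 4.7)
The plan is to prove the set equality by computing $\mu(rs,c)$ and $\mu(s,c)$ explicitly in terms of a base-$r$ expansion of $c$ whose digits lie in $\Sigma_r \setminus K$. Fix $(c,s,d)$ with $c \in C$ and $s = r^{-n} \in r^{-\N_{>0}}$, and write $c = \sum_{i \geq 1} b_i r^{-i}$ with $b_i \in \Sigma_r \setminus K$ (such an expansion exists because $r-1 \notin K$). Let $m' = \max(M \cap (-\infty, b_{n-1}])$ and $m'' = \max(M \cap (-\infty, b_n])$, adopting the convention $\max(\emptyset) = 0$.

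By Lemma~\ref{lem:descmu},
\[
\mu(rs, c) = \sum_{i=1}^{n-2} b_i r^{-i} + m' r^{-(n-1)}, \qquad \mu(s, c) = \sum_{i=1}^{n-1} b_i r^{-i} + m'' r^{-n}.
\]
Subtracting yields $\mu(s,c) - \mu(rs,c) = (b_{n-1}-m')r^{-(n-1)} + m'' r^{-n}$; dividing by $rs = r^{-(n-1)}$ produces $(b_{n-1}-m') + m''/r \in [b_{n-1}-m',\ b_{n-1}-m'+1)$, so the first inequality in the $Z$-disjunction uniquely forces $i = b_{n-1} - m' \in \{0,\ldots,r-1\}$. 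Substituting, $c - \mu(rs,c) - i \cdot rs = \sum_{k \geq n} b_k r^{-k}$; dividing by $s = r^{-n}$ gives $b_n + \sum_{k \geq 1} b_{n+k} r^{-k} \in [b_n,\ b_n+1)$, so the second inequality uniquely forces $j = b_n$. Hence $d = j = b_n$, the digit at position $r^{-n}$ in the preferred base-$r$ expansion of $c$, and this yields both inclusions of the set equality.

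The main obstacle will be the boundary behaviour, namely (i) when $M \cap (-\infty, b_{n-1}]$ or $M \cap (-\infty, b_n]$ is empty, so that the $\max(\emptyset)=0$ convention has to be justified against the formal definition of $\mu$, and (ii) the treatment of points $c$ whose $\Sigma_r \setminus K$-expansion terminates in an infinite tail of $r-1$'s---precisely the left endpoints of complementary intervals---where this expansion disagrees with the preferred expansion read by $U_r$, and the division argument for $j$ lands on the boundary value $b_n + 1$ rather than strictly inside $[b_n, b_n+1)$. Checking the lemma on this countable exceptional set, and tracking how the corrected preferred-expansion digit still emerges from the $Z$-conditions via an overflow adjustment, is the most delicate part of the argument.
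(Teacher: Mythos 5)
Your argument is essentially the paper's: both compute $\mu(rs,c)$ and $\mu(s,c)$ explicitly via Lemma~\ref{lem:descmu} for a $K$-avoiding expansion of $c$, pin down $i$ as the digit minus $\max(M\cap(-\infty,\cdot])$ from the first pair of inequalities, and then force $j$ to be the next digit from the second pair (the paper indexes with $s=r^{-(n+1)}$ where you use $s=r^{-n}$, which is immaterial). The boundary issues you flag are genuine but are equally elided in the paper's own proof, which tacitly assumes every $c\in C$ has a $K$-avoiding expansion with $b_i\neq r-1$ infinitely often (false for left endpoints of complementary intervals, e.g.\ $1/3$ in the middle-thirds case) and uses $\max(M\cap(-\infty,b_n])$ without your $\max(\emptyset)=0$ convention; so your proposal matches the published argument in both substance and level of rigor.
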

\begin{proof}
Let $c\in C$ be such that $c=\sum_{i=1}^{\infty} b_i r^{-i}$, where each $b_i \in \Sigma_r \setminus K$, and $b_i\neq r-1$ for infinitely many $i$.
Let $n \in \N$, and set $s=r^{-(n+1)}$. By Lemma \ref{lem:descmu},
\[
\mu(s,c) - \mu(rs,c) = \big(b_n - \max \left(M \cap (-\infty, b_n]\right)\big)  rs + \max \left(M \cap (-\infty, b_{n+1}]\right)s.
\]
Set $i:= b_n - \max \left(M \cap (-\infty, b_n]\right)$. It follows that
\[
\mu(rs,c) + i rs \leq \mu(s,c)< \mu(rs,c) + (i+1)rs.
\]
Thus, there is $j\in \Sigma_r$ such that
\begin{equation}\label{eq:lemuonc}
\mu(rs,c) + irs + js \leq c < \mu(rs,c)+irs + (j+1) s. \tag{*}
\end{equation}
By Lemma \ref{lem:descmu},
\begin{align*}
c &- (\mu(rs,c) + irs + js) \\
&= c- \left(\left(\sum_{i=1}^{n-1} b_i r^{-i}\right) + \max \left(M \cap (-\infty, b_n]\right)  r^{-n} + i r^{-n} + js\right)\\
&= \sum_{i=n+1}^{\infty} b_i r^{-i} - js.
\end{align*}
From \eqref{eq:lemuonc}, we deduce
\[
0\leq\left(\sum_{i=n+1}^{\infty} b_i r^{-i}\right) - js < s,
\]
or in other words,
\[
0\leq (b_{n+1}-j)r^{-(n+1)}+\sum_{i=n+2}^\infty b_ir^{-i}<r^{-(n+1)}.
\]
Thus $(b_{n+1}-j)r^{-(n+1)}<r^{-(n+1)}$, so that $b_{n+1} =j$. This demonstrates that $U_r(c,s,d)$ if and only if there are $i,j\in \Sigma_r$ such that $d=j$ and $i,j$ satisfy \eqref{eq:lemuonc}. The latter statement holds if and only if $(c,s,d)\in Z$.
\end{proof}

\noindent We can now finish the proof of Theorem A.

\begin{proof}[Proof of Theorem A]
By Lemma \ref{lem:zu}, the restriction of $U_r$ to $C$ is definable. Since $r^{-\N}$ is definable by Corollary \ref{cor:rndef}, the restriction of $V_r$ to $C$ is definable by Fact \ref{fact:vu}. The definability of $W_r$ then follows from Lemma \ref{lem:cexpansion}.
\end{proof}

\section{Finite base $r$ expansions and $\omega$-orderable sets}

Throughout this section, fix some $r\in\N_{\geq 2}$. The purpose of this section is to collect some basic facts we will need about numbers with finite base $r$ expansions. Define $D_r$ to be the set of numbers in $[0,1)$ admitting a finite base $r$ expansion. Notice that $D_r$ is a dense subset of $[0,1)$, and that that $D_r$ is definable in $(\R,<,W_r)$ by
\[
d\in D_r\iff d\in [0,1)\wedge(\exists v>0)(\forall u<v)W_r(d,u,0).
\]
We let $D_1 :=\{0\}$.
Define $\tau_r\colon D_r\rightarrow r^{-\N_{>0}}$ so that $\tau_r(d)$ is the least $u\in r^{-\mathbb{N}_{>0}}$ appearing with nonzero coefficient in the finite base $r$ expansion of $d$. Note that for $x \in D_r$ and $d\in \N_{>0}$, we have $\tau_r(x) = r^{-d}$ if and only if there is $k \in \{0,\dots,r^d-1\}$ such that $x = kr^{-d}$. For $d,e\in D_r$, let
\[
d\prec_r e\iff \tau_r(d)>\tau_r(e)\mbox{ or }\left(\tau_r(d)=\tau_r(e)\mbox{ and }d<e\right).
\]
It is worth distinguishing the following observations.

\begin{lem}\label{lem:omega} The ordered set $(D_r,\prec_r)$ has order type $\omega$.
\end{lem}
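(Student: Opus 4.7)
The plan is to prove the lemma by showing that every proper initial segment of $(D_r, \prec_r)$ is finite; together with the obvious fact that $D_r$ is infinite, this forces $(D_r, \prec_r)$ to have order type $\omega$. First one should verify that $\prec_r$ is a strict linear order on $D_r$: this is essentially the lexicographic order induced by comparing $\tau_r$-values with the reverse of $<$ on $r^{-\N_{>0}}$ and breaking ties with $<$ on $\R$, so totality, transitivity, and irreflexivity reduce to the corresponding properties on $r^{-\N_{>0}}$ and on $\R$.

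The key combinatorial input is the following finiteness statement: for each $n \in \N_{>0}$, the set
\[ A_n := \{x \in D_r : \tau_r(x) \geq r^{-n}\} \]
is finite. This is immediate from the observation made in the text just before the lemma: $\tau_r(x) \geq r^{-n}$ forces $\tau_r(x) = r^{-d}$ for some $d \leq n$, hence $x = k r^{-d}$ with $0 \leq k < r^d$, so $x \in \{0, r^{-n}, 2r^{-n}, \dots, (r^n-1)r^{-n}\}$ and $|A_n| \leq r^n$.

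Given any $e \in D_r$ with $\tau_r(e) = r^{-n}$, an element $d$ with $d \prec_r e$ either satisfies $\tau_r(d) > \tau_r(e)$, placing it in $A_{n-1} \subseteq A_n$, or satisfies $\tau_r(d) = \tau_r(e)$ and $d < e$, again placing it in $A_n$. Thus the initial segment $\{d \in D_r : d \prec_r e\}$ is a subset of the finite set $A_n$, so every proper initial segment of $(D_r, \prec_r)$ is finite; together with the infinitude of $D_r$, this forces $(D_r, \prec_r) \cong (\N, <)$. The only minor pedantry is the element $0 \in D_r$, for which the stated definition does not assign a $\tau_r$-value; declaring $\tau_r(0)$ larger than every element of $r^{-\N_{>0}}$ (equivalently, making $0$ the minimum of $\prec_r$) is consistent with the rest of the definition and does not affect the conclusion. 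I do not anticipate a substantive obstacle here: the whole argument is essentially bookkeeping organized around the finiteness of each $A_n$.
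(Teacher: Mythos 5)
Your proposal is correct and follows essentially the same route as the paper: the paper's one-line proof also rests on the observation that boundedness of $D_r$ makes each fiber $\tau_r^{-1}(r^{-d})$ finite, so that $\prec_r$ is a lexicographic sum of finite orders over $(r^{-\N_{>0}},>)$, which has order type $\omega$; your sets $A_n$ are just the finite unions of these fibers, and your "finite initial segments" formulation is the same argument spelled out in more detail (including the harmless convention needed for $\tau_r(0)$, which the paper leaves implicit).
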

\begin{proof}
As $D_r$ is bounded, $\tau^{-1}(r^{-d})$ is finite for each $d\in\N_{>0}$. As $(r^{-\N_{>0}},>)$ has order type $\omega$, the lemma follows.
\end{proof}

\begin{lem}\label{lem:exp}
Let $r=p_1^{\alpha_1}\cdots p_n^{\alpha_n}$ be the prime factorization of $r$, and let $w\in [0,1)$. Then
\begin{itemize}
\item[(1)] $w\in D_r$  if and only if it can be written in the form $$w=\frac{k}{p_1^{k_1}\cdots p_n^{k_n}}$$ where $k,k_1,\ldots,k_n\in\N$.
\item[(2)] if $w\in D_r$ and $d \in \N_{>0}$, then $\tau_r(w)=r^{-d}$ if and only if $d$ is minimal in $\N$ such that $wr^{d} \in \N$.
\end{itemize}
\end{lem}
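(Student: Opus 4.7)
The plan is to convert between finite base $r$ expansions and fractions of the form $k/r^m$, using the identity $r^m = p_1^{m\alpha_1}\cdots p_n^{m\alpha_n}$. For (1), the forward direction is essentially clearing denominators. If $w = \sum_{i=1}^{m} a_i r^{-i}$ with each $a_i \in \Sigma_r$, then $w = k/r^m$ where $k = \sum_{i=1}^{m} a_i r^{m-i} \in \N$, and the required form follows by taking $k_i = m\alpha_i$. For the reverse direction, given $w = k/(p_1^{k_1}\cdots p_n^{k_n})$, I would choose $m\in\N$ with $m\alpha_i \geq k_i$ for every $i$. Then $p_1^{k_1}\cdots p_n^{k_n}$ divides $r^m$, so $w$ can be rewritten as $k'/r^m$ for some $k' \in \{0,\ldots,r^m-1\}$, and expanding $k' = \sum_{i=0}^{m-1} b_i r^i$ in base $r$ yields the finite base $r$ expansion $w = \sum_{i=1}^{m} b_{m-i} r^{-i}$.

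For (2), I would first observe that any $w \in D_r$ has an essentially unique shortest finite base $r$ expansion $w = \sum_{i=1}^{m} a_i r^{-i}$ with $a_m \neq 0$; by the definition of $\tau_r$ this gives $\tau_r(w) = r^{-m}$. It then suffices to show that $m$ is the minimum of the set $\{d \in \N_{>0} : wr^d \in \N\}$. Writing $w = k/r^m$ with $k = \sum_{i=1}^{m} a_i r^{m-i}$, the key point is that $k \bmod r = a_m \neq 0$, so $r \nmid k$. Then $wr^d = k\cdot r^{d-m}$ lies in $\N$ precisely when $d \geq m$: for $d < m$, membership would force $r^{m-d}$ to divide $k$, contradicting $r \nmid k$. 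Hence $m$ is the minimum in question, and $\tau_r(w) = r^{-d}$ iff $d$ achieves this minimum.

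I do not foresee any real obstacles. The only subtlety worth noting is the essential uniqueness of the finite base $r$ expansion: two competing finite expansions of the same real would force a cascading equality of digit strings analogous to $0.1 = 0.0999\ldots$ in base $10$, whose right-hand side is not finite. This justifies treating $\tau_r(w)$ as well-defined via the position of the last nonzero digit in the shortest finite expansion.
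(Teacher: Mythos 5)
Your proof is correct and follows essentially the same route as the paper: part (1) is handled by clearing denominators in both directions, exactly as in the paper's argument. The paper explicitly leaves part (2) to the reader, and your argument for it (writing $w = k/r^m$ with $r \nmid k$ because the last digit $a_m$ is nonzero, then characterizing when $wr^d \in \N$) is a correct and natural way to fill that gap.
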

\begin{proof}
We will prove (1) and leave the easy proof of (2) to the reader. If $w\in D_r$, we can write $w$ as $w_{-1}r^{-1}+\cdots+w_{-l}r^{-l}$ with $0\leq w_i\leq r-1$ for $-l\leq i \leq -1$. Thus $w\cdot r^l\in\N$, and so $w$ is of the desired form. Conversely, if we write
\[
w=\frac{k}{p_1^{k_1}\cdots p_n^{k_n}}=\frac{kp_1^{l-k_1}\cdots p_n^{l-k_n}}{r^l}
\]
with $l\geq\max\{k_1,\ldots,k_n\}$, then $r^{l}w=kp_1^{l-k_1}\cdots p_n^{l-k_n}\in \N$. Hence $r^lw$ has finite base $r$ expansion, and so too does $w$.
\end{proof}

\begin{lem}\label{lem:tau} Let $r=p_1^{\alpha_1}\cdots p_n^{\alpha_n}$ be the prime factorization of $r$, $w\in D_r$, and $m\in\N$ and $d_1,\ldots,d_n\in\Z$ be such that $w=mp_1^{d_1}\cdots p_n^{d_n}$ with $p_i\nmid m$ for $i\in\{1,\ldots,n\}$. Then $\tau_r(w)=r^e$, where
\[
e=\min\left\{\left\lfloor\frac{d_1}{\alpha_1}\right\rfloor,\ldots,\left\lfloor\frac{d_n}{\alpha_n}\right\rfloor\right\}.
\]
\end{lem}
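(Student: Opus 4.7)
The plan is to reduce to Lemma~\ref{lem:exp}(2), which characterizes $\tau_r(w)$ as $r^{-d}$ where $d$ is the minimal element of $\N$ such that $wr^d \in \N$. So it suffices to identify this minimal $d$ in terms of the $d_i$ and $\alpha_i$, and then set $e = -d$.

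First, substitute the prime factorizations of $w$ and $r$ to get, for any $d \in \N$,
\[
wr^d \;=\; m \, p_1^{d_1 + d\alpha_1} \cdots p_n^{d_n + d\alpha_n}.
\]
Because $p_i \nmid m$ for each $i$, no cancellation of primes is possible; hence the right-hand side lies in $\N$ if and only if every exponent $d_i + d\alpha_i$ is nonnegative. (If some $d_i + d\alpha_i < 0$, the factor $p_i^{d_i + d\alpha_i}$ contributes a denominator coprime to $m$, so the product is not an integer.)

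So $wr^d \in \N$ if and only if $d \geq -d_i/\alpha_i$ for each $i$, equivalently $d \geq \lceil -d_i/\alpha_i \rceil = -\lfloor d_i/\alpha_i \rfloor$ for each $i$. The minimal $d \in \N$ satisfying all these inequalities is
\[
d \;=\; \max_{i} \bigl(-\lfloor d_i/\alpha_i \rfloor\bigr) \;=\; -\min_{i} \lfloor d_i/\alpha_i \rfloor,
\]
noting that $w \in D_r \cap [0,1)$ forces this quantity to be nonnegative (indeed positive when $w > 0$, since $w < 1$ requires at least one $d_i$ to be sufficiently negative). Setting $e = -d = \min_i \lfloor d_i/\alpha_i \rfloor$ and applying Lemma~\ref{lem:exp}(2) gives $\tau_r(w) = r^e$, as claimed.

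There is no real obstacle here; the only point requiring a moment's care is the observation that coprimality of $m$ with each $p_i$ rules out cancellation, so that the integrality of $wr^d$ really does force $d_i + d\alpha_i \geq 0$ for every $i$ individually rather than merely on average. Everything else is bookkeeping with the identity $\lceil -x \rceil = -\lfloor x \rfloor$.
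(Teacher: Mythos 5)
Your proposal is correct and follows essentially the same route as the paper: both reduce to Lemma~\ref{lem:exp}(2) and identify the minimal $d\in\N$ with $wr^d\in\N$ as $-\min_i\lfloor d_i/\alpha_i\rfloor$, using that $p_i\nmid m$ prevents cancellation (the paper phrases this as choosing $e$ maximal with $d_i-e\alpha_i\geq 0$ and noting $r\nmid r^{-e}w$). Your version is just slightly more explicit about the floor/ceiling bookkeeping and the positivity of $d$.
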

\begin{proof}
Let $e\in \Z$ be maximal such that $d_i-e\alpha_i\geq 0$ for each $i$. Then
\[
r^{-e}w = mp_1^{d_1-e\alpha_1}\cdots p_n^{d_n-e\alpha_n}\in\N.
\]
Since $r\nmid mp_1^{d_1-e\alpha_1}\cdots p_n^{d_n-e\alpha_n}$, $-e$ is the minimal element of $\N$ with this property. By Lemma \ref{lem:exp}(2), $\tau_r(w)=r^{e}$. The statement of the Lemma follows.
\end{proof}

\begin{lem}\label{lem:bool} Let $r,s\in\N_{\geq 2}$. Then
\begin{enumerate}
\item $D_r\cap D_s=D_{\gcd(r,s)}$;
\item if $r$ and $s$ are coprime, then $D_r\cap D_s=\{0\}$;
\item if $r$ and $s$ share the same prime factors, then $D_r=D_s$;
\end{enumerate}
\end{lem}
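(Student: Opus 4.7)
The plan is to reduce all three statements to a single clean reformulation of Lemma \ref{lem:exp}(1): if we write $w \in [0,1)$ in lowest terms as $w = a/b$ (with the convention $0 = 0/1$), then $w \in D_r$ if and only if every prime factor of $b$ is a prime factor of $r$. The forward direction of this reformulation is immediate from Lemma \ref{lem:exp}(1), since any expression $w = k/(p_1^{k_1} \cdots p_n^{k_n})$ forces the reduced denominator $b$ to divide $p_1^{k_1} \cdots p_n^{k_n}$. The reverse direction follows by multiplying numerator and denominator of $a/b$ by an appropriate product of prime powers from among $p_1, \ldots, p_n$ to reach the form required by Lemma \ref{lem:exp}(1).

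With this characterization in hand, I would prove (3) first: if $r$ and $s$ share the same prime factors, then the conditions ``every prime factor of $b$ divides $r$'' and ``every prime factor of $b$ divides $s$'' are literally the same condition, so $D_r = D_s$. Next I would prove (1) by letting $t = \gcd(r,s)$ and observing that the prime factors of $t$ are precisely the common prime factors of $r$ and $s$; thus $w = a/b \in D_r \cap D_s$ iff every prime factor of $b$ divides both $r$ and $s$, iff every prime factor of $b$ divides $t$, iff $w \in D_t$. Finally, (2) follows as a corollary of (1): if $r$ and $s$ are coprime, then $\gcd(r,s) = 1$ and so $D_r \cap D_s = D_1 = \{0\}$ by the convention fixed before the lemma.

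The only step requiring any care is the ``$\Leftarrow$'' direction of the reformulation, which is really just a bookkeeping exercise with prime factorizations; everything else is immediate from the characterization. There is no substantive obstacle in this lemma, only the need to set up the right equivalent form of Lemma \ref{lem:exp}(1) so that all three claims become statements about containment of sets of prime factors.
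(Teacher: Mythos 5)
Your proposal is correct and follows essentially the same route as the paper: both arguments come down to Lemma \ref{lem:exp}(1) together with uniqueness of reduced fractions, characterizing membership in $D_r$ by the condition that every prime factor of the reduced denominator divides $r$. The only cosmetic difference is that you make this characterization explicit up front and derive (1) and (3) from it (with (2) as a corollary of (1) via the convention $D_1=\{0\}$), whereas the paper proves (1) directly and treats (2) and (3) as immediate consequences; the mathematical content is identical.
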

\begin{proof}
Statement (3) follows directly from Lemma \ref{lem:exp}(1), and Statement (2) is a special case of Statement (1). Therefore, we just need to prove (1). Write $r=p_1^{\alpha_1}\cdots p_n^{\alpha_n}$ and $s=q_1^{\beta_1}\cdots q_m^{\beta_m}$ for the prime factorizations of $r$ and $s$. If $w\in D_r\cap D_s$, then by Lemma \ref{lem:exp}(1) we can write $w$ as
\[
w=\frac{k}{p_1^{k_1}\cdots p_n^{k_n}}=\frac{l}{q_1^{l_1}\cdots q_m^{l_m}}
\] with $k,k_1,\ldots,k_n,l,l_1,\ldots,l_m\in\N$. If these are written in reduced form, then $\{p_i:k_i\neq 0\}=\{q_i:l_i\neq 0\}$ by uniqueness of such presentations. Thus $w\in D_{\gcd(r,s)}$ by Lemma \ref{lem:exp}(1). The other inclusion is immediate by Lemma \ref{lem:exp}(1).
\end{proof}

\noindent Statement (3) of Lemma \ref{lem:bool} was already recognized in \cite[Proof of Theorem 5.3]{BBB2010} as an obstruction to establishing stronger analogues of Cobham's theorem. We will see in the next section that Lemma \ref{lem:bool} is also the reason why the proof of Theorem C is more complicated in the case that $r,s$ are not coprime.

\begin{cor}\label{cor:bool} Let $r,s\in\N_{\geq 2}$ be coprime. Then $(D_r-D_r)\cap (D_s-D_s)=\{0\}$.
\end{cor}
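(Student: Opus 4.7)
The plan is to combine Lemma \ref{lem:exp}(1) with the boundedness of $D_r$ in order to reduce the claim to the fact that the only integer in $(-1,1)$ is $0$. Concretely, I would first observe that any element of $D_r - D_r$ is a rational number whose denominator in lowest terms involves only prime factors of $r$. Indeed, if $d, d' \in D_r$, then by Lemma \ref{lem:exp}(1) we can write $d = k/r^{l}$ and $d' = k'/r^{l}$ for some common $l \in \N$ and some $k, k' \in \N$; hence $d - d' = (k-k')/r^{l}$, and after cancellation the denominator has prime divisors only among those of $r$. In the same manner, every element of $D_s - D_s$ has, in lowest terms, a denominator whose prime divisors all divide $s$.

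Now suppose $w \in (D_r - D_r) \cap (D_s - D_s)$. Writing $w = a/b$ in lowest terms, the step above shows that every prime dividing $b$ divides both $r$ and $s$. Since $\gcd(r,s) = 1$, no such prime exists, so $b = 1$ and $w \in \Z$.

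It remains to bound $|w|$. Since $D_r \subseteq [0,1)$, we have $D_r - D_r \subseteq (-1,1)$, and the only integer in $(-1,1)$ is $0$, which gives $w = 0$ and proves the corollary. There is no serious obstacle here; the coprimality hypothesis is used purely to force the common denominator to be $1$, and the half-open interval convention in the definition of $D_r$ supplies the needed strict bound $|w|<1$.
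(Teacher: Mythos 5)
Your proof is correct and rests on the same ingredients as the paper's: the characterization of $D_r$ via denominators supported on the primes of $r$ (Lemma \ref{lem:exp}(1)), coprimality forcing the reduced denominator to be $1$, and the bound $D_r-D_r\subseteq(-1,1)$. The paper merely packages this differently, observing that nonnegative differences of elements of $D_r$ stay in $D_r$ and then citing Lemma \ref{lem:bool}(2), whereas you inline that computation directly.
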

\begin{proof}
Let $a_1,a_2\in D_r$ and $b_1,b_2 \in D_s$ be such that $a_1 - a_2 = b_1 - b_2$. From the definition of $D_r$ and $D_s$ we deduce that  $a_1 - a_2 \in D_r$ and $b_1-b_2\in D_s$ whenever $a_1 -a_2\geq 0$, and that $a_2 -a_1 \in D_r$ and $b_2 - b_1 \in D_s$ whenever $a_1 - a_2<0$. The statement of the corollary follows now directly from Lemma \ref{lem:bool}(2).
\end{proof}

\subsection{Dense $\omega$-orderable sets}
Let $\Cal R$ be an expansion of $(\R,<)$ and $I$ be an interval of $\R$. We say a set $D\subseteq \R$ is a \textbf{dense $\omega$-orderable subset of $I$} in $\Cal R$ if $D$ is dense in $I$ and there exists a definable order $\prec$ on $D$ such that $(D,\prec)$ has order type $\omega$. By Lemma \ref{lem:omega}, $D_r$ is a dense $\omega$-orderable subset of $[0,1)$ in the expansion $(\R,<,W_r)$.\newline

\noindent The following fact is a slight generalization of \cite[Theorem A]{HT} that was first observed in \cite[Proposition 3.8]{FHW}.

\begin{fact}\label{fact:htplus}
Let $\Cal R$ be an expansion of $(\R,<)$. Suppose $\Cal R$ defines an order $(D,\prec)$, an open interval $I\subseteq\R$, and a function $g\colon\R^3\times D\rightarrow D$ such that
\begin{itemize}
\item $(D,\prec)$ has order type $\omega$ and $D$ is dense in $I$, and
\item for every $a,b\in I$ and $e,d\in D$ with $a<b$ and $e\preceq d$,
\[\{c\in\R:g(c,a,b,d)=e\}\cap(a,b)\mbox{ has nonempty interior.}\]
\end{itemize}
Then $\Cal R$ defines every subset of $D^n$ and every open subset of $I^n$ for every $n\in \N$.
\end{fact}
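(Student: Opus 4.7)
The proof has three stages: (i) every subset of $D$ is $\mathcal{R}$-definable with parameters; (ii) every subset of $D^n$ is definable; (iii) every open subset of $I^n$ is definable. The first is the heart of the argument; the other two follow from it combined with the density of $D$ in $I$ and the $\omega$-order type of $(D,\prec)$.

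For stage (i), I would encode an arbitrary $S\subseteq D$ as a single real parameter $c_S$ via a nested-interval construction. Fix $a<b$ in $I$ and enumerate $D=\{d_0\prec d_1\prec\cdots\}$. For each $n$, set $e_n:=d_n$ if $d_n\in S$ and $e_n:=d_0$ otherwise, so that $e_n\preceq d_n$ in either case. Inductively build nested closed intervals $[\alpha_0,\beta_0]\supseteq[\alpha_1,\beta_1]\supseteq\cdots$ inside $(a,b)$, with $\alpha_n,\beta_n\in I$, such that $g(c,\alpha_n,\beta_n,d_n)=e_n$ for every $c\in[\alpha_n,\beta_n]$. The inductive step applies the hypothesis to $\alpha_n,\beta_n\in I$ and to the pair $e_{n+1}\preceq d_{n+1}$ in $D$, yielding a nonempty open subset of $(\alpha_n,\beta_n)$ on which $g(\,\cdot\,,\alpha_n,\beta_n,d_{n+1})=e_{n+1}$; select $[\alpha_{n+1},\beta_{n+1}]$ inside this open set. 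Compactness then produces $c_S\in\bigcap_n[\alpha_n,\beta_n]$.

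The main obstacle is turning this construction into a first-order definition of $S$ from the parameters $c_S,a,b$, because the auxiliary intervals $(\alpha_n,\beta_n)$ vary with $n$. The resolution is to make the subinterval selection canonical --- for instance, always pick the $\prec$-first admissible subinterval of a prescribed shape with endpoints in $D$ --- so that the map sending $d_n$ to $(\alpha_n,\beta_n)$ becomes $\mathcal{R}$-definable uniformly in $d_n$ from $a,b,c_S$. The defining formula for $S$ then reads $\phi(d;c_S,a,b)\equiv g(c_S,\alpha(d),\beta(d),d)=d$, where $\alpha,\beta$ are the definable functions so obtained; this is essentially the content inherited from \cite{HT}.

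For stage (ii), the $\omega$-order type of $(D,\prec)$ yields a definable Cantor-style bijection $D^n\leftrightarrow D$, which pulls any subset of $D^n$ back to one of $D$, definable by stage (i). For stage (iii), given an open $U\subseteq I^n$, set $V_U:=\{(\bar d,\bar e)\in D^{2n}:\bar d<\bar e\text{ componentwise},\ \prod_i(d_i,e_i)\subseteq U\}$; this is a subset of $D^{2n}$ and hence $\mathcal{R}$-definable by stage (ii). Since $D$ is dense in $I$, $U$ equals the union of the open boxes indexed by $V_U$, so $U$ is definable by $x\in U\iff \exists(\bar d,\bar e)\in V_U,\ \bar d<x<\bar e$.
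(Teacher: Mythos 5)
First, a remark on the comparison: the paper does not prove this statement at all --- it is imported as a known result, cited to \cite[Theorem A]{HT} and \cite[Proposition 3.8]{FHW} --- so there is no internal argument to measure your proposal against. Judged on its own terms, your stage (i) has a genuine gap exactly at the point you flag as ``the main obstacle.'' Making each single step of the nested-interval construction canonical (``pick the $\prec$-first admissible subinterval with endpoints in $D$'') does not make the map $d_n\mapsto(\alpha_n,\beta_n)$ first-order definable: that map is the $n$-fold iterate of a definable one-step operation, and expressing ``$(\alpha,\beta)$ is the result of $n$ iterations starting from $(a,b)$'' requires quantifying over finite sequences of reals of unbounded length. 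An arbitrary expansion of $(\R,<)$ has no such coding a priori --- indeed, producing that coding power is precisely what the theorem is asserting, so invoking it here is circular. A correct argument must arrange that the interval attached to $d$ is recoverable from $c_S$ and $d$ by a \emph{direct, non-recursive} formula (for instance, as a gap of the finite set $D_{\preceq d}$ determined by the position of $c_S$, or via some comparable device), and then verify that the nested-interval construction can actually be threaded through intervals of that prescribed definable shape; the second verification is delicate, because the hypothesis only guarantees that $\{c:g(c,a,b,d)=e\}\cap(a,b)$ has \emph{some} nonempty interior, located anywhere in $(a,b)$, so it need not meet a subinterval fixed in advance. This is the real content of the cited result and it is missing from your sketch.

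Stage (ii) has a second, smaller gap: a ``Cantor-style'' pairing bijection $D^n\leftrightarrow D$ is not definable from the order $(D,\prec)$ alone (its pullback to $(\N,<)$ would define addition, which $(\N,<)$ does not, even after naming arbitrary unary predicates as parameters), so the passage from subsets of $D$ to subsets of $D^n$ needs its own argument, presumably reusing $g$ or running the interval construction in higher arity. Stage (iii) is fine modulo stage (ii): writing an open $U\subseteq I^n$ as the union of the boxes indexed by $V_U\subseteq D^{2n}$ is correct and uses the density of $D$ in $I$ exactly as needed.
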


\noindent It is often non-trivial to check whether a given expansion satisfies the assumptions of Fact \ref{fact:htplus}. The next Lemma gives an easy to use criterion when there are multiple dense $\omega$-orderable subsets.

\begin{lem}\label{lem:diff}
Let $\mathcal{R}$ be an expansion of $(\R,<,+)$. If there exist two dense $\omega$-orderable subsets $C$ and $D$ of $(0,1)$ such that $(C-C)\cap (D-D)=\{0\}$, then $\mathcal{R}$ defines every open subset of $(0,1)^n$ for any $n\in\N$.
\end{lem}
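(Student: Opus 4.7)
The plan is to apply Fact~\ref{fact:htplus} with $I := (0,1)$ and the given set $D$, equipped with its definable order $\prec$, serving as the dense $\omega$-orderable set. It then suffices to construct a definable function $g \colon \R^3 \times D \to D$ such that for every $a,b \in (0,1)$ with $a<b$ and every $e \preceq d$ in $D$, the set $\{c \in \R : g(c,a,b,d) = e\} \cap (a,b)$ has nonempty interior.

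The structural input provided by the hypothesis $(C-C)\cap(D-D) = \{0\}$ is that whenever a real $x$ decomposes as $x = \gamma + \delta$ with $\gamma \in C$ and $\delta \in D$, this decomposition is unique; equivalently, the family of translates $\{\delta + C : \delta \in D\}$ is pairwise disjoint. This yields a definable partial ``projection onto the $D$-coordinate'' on $C+D$, and more generally a first-order way to transfer data between the two $\prec$-orderings through the ambient sum.

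Given parameters $(a,b,d)$, the finite set $D_d := \{e \in D : e \preceq d\}$ needs to be matched with $|D_d|$ pairwise disjoint open subintervals of $(a,b)$. My plan is to use the density of $C$ to pick thresholds from a suitable $\prec_C$-initial portion of $C \cap (0, b-a)$, shift by $a$ to land inside $(a,b)$, and label the resulting gaps by elements of $D_d$ through a $\prec$-preserving assignment. The function $g(c,a,b,d)$ is then defined to be the label of the gap containing $c$, with a default value elsewhere, and the nonempty-interior condition for each $e \preceq d$ follows at once from the construction of the partition.

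The hard part is making the matching between the finite $\prec_D$-initial segment $D_d$ and the corresponding $\prec_C$-initial segment of $C \cap (0, b-a)$ definable by a single first-order formula uniform in $d$: although such a matching exists uniquely in set-theoretic terms, a naive recursive or rank-counting definition is not directly available in first-order logic over $(\R,<,+)$. The resolution is to exploit the uniqueness of $C+D$-decomposition to characterize each matched pair $(e, \gamma_e)$ by a direct arithmetic condition on elements of $C+D$, bypassing explicit cardinality comparison between the two $\prec$-initial segments. Once this matching is captured first-order, the verification of Fact~\ref{fact:htplus}'s hypothesis is straightforward, and the conclusion of the lemma follows.
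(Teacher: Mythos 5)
Your high-level strategy (apply Fact~\ref{fact:htplus}, and read the hypothesis $(C-C)\cap(D-D)=\{0\}$ as uniqueness of decompositions in $C+D$) is the right one, and you correctly locate the difficulty: a first-order formula cannot count, so one cannot directly define an order-isomorphism between the finite $\prec_D$-initial segment $D_d$ and ``the corresponding'' $\prec_C$-initial segment of $C\cap(0,b-a)$. But your proposed resolution is only asserted, not constructed: ``characterize each matched pair $(e,\gamma_e)$ by a direct arithmetic condition on elements of $C+D$'' is exactly the step that carries the whole proof, and nothing in your sketch says what that condition is. Worse, the plan you describe --- partition $(a,b)$ into gaps and label them by $D_d$ via a \emph{$\prec$-preserving} assignment --- is precisely the initial-segment matching you admit is unavailable; uniqueness of $C+D$-decompositions gives injectivity of certain difference maps, but it does not by itself produce an order-preserving correspondence between initial segments of two unrelated $\omega$-orders.

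The paper's proof sidesteps the matching problem by never requiring the assignment to respect either order. It applies Fact~\ref{fact:htplus} to $(C,\prec_C)$ and, for $u>0$, $d\in C$ and $e\in C$, tags $e$ with $h_1(u,d,e):=$ the $\prec_D$-minimal $t\in D$ such that $t\in(e,e+u)$ and $t$ is $<$-closer to $e$ than to any other element of the finite definable set $C_{\prec_C d}$; this is a single first-order formula uniform in the parameters, and no comparison of cardinalities is ever needed. Setting $h_2(u,d,e)=h_1(u,d,e)-e$, injectivity of $e\mapsto h_2(u,d,e)$ for fixed $u,d$ is then automatic: $h_2(u,d,e_1)=h_2(u,d,e_2)$ forces $h_1(u,d,e_1)-h_1(u,d,e_2)=e_1-e_2\in(D-D)\cap(C-C)=\{0\}$. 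One then takes $g(c,a,b,d)$ to be the $\prec_C$-minimal $e\in C_{\preceq_C d}$ minimizing $|(c-a)-h_2(b-a,d,e)|$, and the nonempty-interior condition of Fact~\ref{fact:htplus} follows from finiteness of $C_{\preceq_C d}$ together with this injectivity. Your argument needs this (or an equivalent) explicit definable tagging to be complete; as written it stops exactly where the work begins.
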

\begin{proof}
We essentially follow the proof of \cite[Theorem C]{HT}. Let $\prec_C$ and $\prec_D$ be the definable orders of order type $\omega$ on $C$ and $D$ respectively. Define $h_1\colon\R_{>0}\times C\times C\rightarrow D$ by letting $h_1(u,d,e)$ be the $\prec_D$-minimal $t\in D$ such that $t\in (e,e+u)$ and $t$ is $<$-closer to $e$ than any other element of $C_{\prec_C d}$. Define $h_2\colon\R_{>0}\times C\times C\rightarrow (D-C)$ by $h_2(u,d,e)=h_1(u,d,e)-e$. Notice that for fixed $u\in\R_{>0}$ and $d\in C$, the function $e\mapsto h_2(u,d,e)$ is injective; indeed, if $u\in\R_{>0}$ and $d,e_1,e_2\in C$ are such that $h_2(u,d,e_1)=h_2(u,d,e_2)$, then
\[
h_1(u,d,e_1)-h_1(u,d,e_2)=e_1-e_2\in (C-C)\cap (D-D)=\{0\}.
\]
Thus $e_1=e_2$ as claimed. Define now $g\colon\R^3\times C\rightarrow C$ so that if $a<b$, then $g(c,a,b,d)$ is the $\prec_C$-minimal $e\in C_{\preceq_C d}$ such that $|(c-a)-h_2(b-a,d,e)|$ is minimal. We now claim that Fact \ref{fact:htplus} applies to the ordered set $(C,\prec_C)$ and function $g$. The claim is that for fixed $a<b\in\R$ and $e\preceq_C d\in C$, the set
\[
\{c\in\R:g(c,a,b,d)=e\}\cap (a,b)
\]
has nonempty interior. Notice that $a+h_2(b-a,d,e)\in (a,b)$ and $g(a+h_2(b-a,d,e),a,b,d)=e$. By finiteness of $C_{\preceq_C d}$ and injectivity of $h_2(b-a,d,-)$, there is an open interval $I$ around $a+h_2(b-a,d,e)$ such that for all $c\in I$, $g(c,a,b,d)=e$. This concludes the proof.
\end{proof}

\subsection{Expansions of $\Cal S_r$} We now collect two corollaries of Fact \ref{fact:htplus} when we restrict to the special case that $\Cal R$ is an expansions of $\Cal S_r$.

\begin{prop}\label{prop:infinitepreimage} Let $\ell\in \N_{>0}$ and let $f \colon r^{-\N} \to r^{-\ell\N}$ be such that $f^{-1}(r^{-\ell d})$ is infinite for all $d \in \N$. Then
$(\R,<,+,W_r,f)$ defines every compact set.
\end{prop}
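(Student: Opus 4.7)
The plan is to apply Fact~\ref{fact:htplus} to $\mathcal{R} := (\R, <, +, W_r, f)$ with $D = D_r$ (dense $\omega$-orderable in $I = (0, 1)$ by Lemma~\ref{lem:omega}), by constructing a suitable $g \colon \R^3 \times D_r \to D_r$ definable in $\mathcal{R}$ using $W_r$ and $f$. Since $W_r$ already supplies $D_r$, $\tau_r$, and digit-truncation operations, the task reduces to designing $g$ using $f$.

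The key use of the infinite-preimage hypothesis is this: the first $r$ elements of $r^{-\ell\N}$, namely $r^{-\ell \rho}$ for $\rho \in \{0, \ldots, r-1\}$, are definable constants in $\mathcal{R}$, and each preimage $f^{-1}(r^{-\ell \rho})$ is, by hypothesis, an infinite definable subset of $r^{-\N}$. The construction of $g$ then proceeds as follows. Given $a < b$ in $(0, 1)$, $d \in D_r$ with $\tau_r(d) = r^{-n_d}$, and $c \in \R$, we extract from the base-$r$ digits of $c - a$ a sequence of positions $u_1^*, \ldots, u_{n_d}^* \in r^{-\N}$ living in disjoint cofinal blocks of $r^{-\N}$ indexed by $i$ (definable uniformly in $d$ via $W_r$ and the $\prec_r$-structure on $r^{-\N}$). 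We set $e_i := \rho$ if $f(u_i^*) = r^{-\ell \rho}$ for some $\rho \in \{0, \ldots, r-1\}$, and $e_i := 0$ otherwise, and define $g(c, a, b, d) := \sum_{i=1}^{n_d} e_i r^{-i}$, replaced by a default if this fails $\preceq_r d$. For any target $e = \sum e_i^* r^{-i} \preceq_r d$, since each $f^{-1}(r^{-\ell e_i^*})$ meets every cofinal block infinitely often, we can choose $u_i^*$ achieving $f(u_i^*) = r^{-\ell e_i^*}$; the corresponding constraint on the digits of $c - a$ describes a nonempty open subset of $(a, b)$ on which $g$ equals $e$.

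Fact~\ref{fact:htplus} then gives that $\mathcal{R}$ defines every open subset of $(0, 1)^n$. Every compact $K \subseteq \R^n$ is bounded, so for some $M \in \N$ used as a parameter the affine map $x \mapsto (x + (M, \ldots, M))/(2M+1)$ (definable in $(\R, <, +)$) carries $K$ into a compact subset of $(0, 1)^n$, which is the complement of a definable open set; applying the inverse recovers $K$. The main obstacle is the careful construction of the block-based extraction $c \mapsto (u_1^*, \ldots, u_{n_d}^*)$ uniformly in $d$: the blocks must be definable in $\mathcal{R}$ despite $n_d$ varying with $d$, and must remain cofinal in $r^{-\N}$ so the infinite-preimage hypothesis yields the realizability of every target digit pattern. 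This is handled via the $\prec_r$-ordering on $r^{-\N}$ together with the digit access supplied by $W_r$, with $f$ itself potentially entering the block definitions to avoid operations (such as general multiplication on $r^{-\N}$) that $\Cal S_r$ does not define.
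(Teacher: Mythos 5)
You have the right frame: apply Fact \ref{fact:htplus} to $(D_r,\prec_r)$ and build $g$ by encoding an arbitrary element of $D_r$ into the digits of $c$ via the infinite fibers of $f$; this is also the paper's strategy. But the core of your construction --- the ``disjoint cofinal blocks of $r^{-\N}$ indexed by $i\in\{1,\dots,n_d\}$'' --- is exactly where the proof lives, and what you say about it does not hold up. First, the realizability claim that ``each $f^{-1}(r^{-\ell e_i^*})$ meets every cofinal block infinitely often'' is not a consequence of the hypothesis: the fibers are merely infinite, and for any block structure fixed in advance (e.g.\ exponents in a fixed residue class) an infinite fiber can avoid a given block entirely. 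Second, even granting that, a uniformly definable family of partitions of $r^{-\N}$ into $n_d$ cofinal pieces, with $n_d$ unbounded as $d$ ranges over $D_r$, is not available in $\Cal S_r$: the natural candidates (interleaving or taking exponents modulo $n_d$) require maps such as $r^{-n}\mapsto r^{-n_d n}$, whose graphs are not $r$-recognizable and hence not definable. You flag this as ``the main obstacle'' and defer it, but it is the entire content of the proposition; you also assert without argument that the digit constraints on $c-a$ cut out a nonempty open subset of $(a,b)$, which requires forcing all carrier positions below a threshold depending on $b-a$.

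The paper resolves both issues with a move you do not make: it uses the fibers of $f$ over the \emph{entire} range $r^{-\ell\N}$, not only over $r^{-\ell\rho}$ for $\rho<r$. For each $d$, infinitude of $f^{-1}(r^{-\ell d})$ lets one definably select its $\ell$ largest elements below a threshold determined by $(a,b)$ (the functions $h_1,\dots,h_\ell$); these give pairwise disjoint $\ell$-element position sets indexed by $d\in\N$, where disjointness is automatic because distinct fibers are disjoint and the indexing is definable because it \emph{is} $f$. A target $e\in D_r=D_{r^\ell}$ with base-$r^\ell$ digits $u_d$ is encoded as $\sum_d\sum_i v_{d,i}\,h_i(a,b,r^{-\ell d})$, where $(v_{d,0},\dots,v_{d,\ell-1})$ are the base-$r$ digits of $u_d$: the position of a digit is recorded by which fiber carries it, and its value by which of the $\ell$ selected elements are switched on. Your scheme inverts this (position $=$ block, value $=$ fiber), and that inversion is what forces you to manufacture blocks the structure cannot define; repairing it by building the blocks out of $f$ essentially reconstructs the $h_k$'s. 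As it stands, the proposal has a genuine gap at its central step.
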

\begin{proof}
Let $Z := \{ (a,b) \in [0,1] \ : \ a < b\}$ and let $\lambda \colon Z \to D_r$ map a pair $(a,b) \in Z$ to the $\prec_r$-minimal element in $(a,b)\cap D_r$.
For $k\in \{1,\dots,\ell\}$, define $h_k \colon Z \times r^{-\ell\N} \to r^{-\N}$ to be the function that maps $(a,b,r^{-\ell d})$ to the $k$-th $<$-largest $r^{-e}\in r^{-\N}$ such that
\begin{itemize}
\item[(1)] $f(r^{-e}) = r^{-d}$,
\item[(2)] $b-\lambda(a,b) > r^{-e+1}$.
\end{itemize}
It follows directly from (1) that if $(a,b) \in Z$, $d,d' \in \N$, and $k,k'\in \{1,\dots,\ell\}$, then
\begin{itemize}
\item[(I)]  $h_k(a,b,r^{-\ell d}) \neq h_{k}(a,b,r^{-\ell d'})$ whenever $d\neq d'$,
\item[(II)] $h_k(a,b,r^{-\ell d}) \neq h_{k'}(a,b,r^{-\ell d})$ whenever $k\neq k'$.
\end{itemize}
For $(a,b) \in Z$, let $Y_{a,b}$ be the set of all $x\in [0,1]$ such that
\[
\forall r^{-e} \in r^{-\N} \left[\bigvee_{i=1}^{r-1} U_r(x,r^{-e},i) \rightarrow \bigvee_{k=1}^{\ell} r^{-e} \in h_k(a,b,r^{-\ell\N})\right].
\]
By (2), we get that for all $x \in Y_{a,b}$,
\[
\forall r^{-e} \in r^{-\N} \left[\bigvee_{i=1}^{r-1} U_r(x,r^{-e},i) \rightarrow r^{-e+1} < b-\lambda(a,b)\right].
\]
In other words, if the digit corresponding to $r^{-e}$ in the base $r$ representation of element $x\in Y_{a,b}$ is positive, then $r^{-e}$ is smaller than $r^{-1}(b-\lambda(a,b))$. Thus, every element in $Y_{a,b}$ is smaller than $b-\lambda(a,b)$. Therefore, $\lambda(a,b) + Y_{a,b} \subseteq (a,b)$.\newline

\noindent Let $\nu \colon \Sigma_{r^\ell} \to \Sigma_r^\ell$ map $u\in \Sigma_{r^\ell}$ to the unique tuple $(v_0,\dots,v_{\ell-1})\in \Sigma_r$ such that
\[
u = \sum_{i=0}^{\ell-1} v_i r^{i}.
\]
Let $g_0 \colon Z \times D_{r^\ell}\to Y_{a,b}$ be given by
\[
\left(a,b,\sum_{d=1}^{n} u_d r^{-\ell d}\right) \mapsto \sum_{d=1}^{n} \sum_{i=0}^{\ell-1} v_{d,i} h_i(a,b,r^{-\ell d}),
\]
where $\nu(u_d) = (v_{d,0},\dots,v_{d,\ell-1})$ for $d\in \N_{>0}$. Since $W_{r^{\ell}}$ is definable in $(\R,<,+,W_r)$, so is $g_0$. By (I) and (II) and the uniqueness of finite base $r^\ell$ expansions, the function $g_0(a,b,-)$ is injective for fixed $(a,b)\in Z$. Observe that for all $a,b\in Z$, $\lambda(a,b) + g_0(a,b,D_{r^{\ell}})\subseteq (a,b)$.\newline

\noindent By Lemma \ref{lem:exp}(i), $D_r = D_{r^{\ell}}$. Let $g\colon \R^3 \times D_r \to D_r$ map $(c,a,b,d)$ to $0$ if $(a,b) \notin Z$, and otherwise to the $\prec_r$-minimal $e \in (D_r)_{\preceq d}$ such that $|c-(\lambda(a,b)+g_0(a,b,e))|$ is minimal. We can deduce from the injectivity of $g_0(a,b,-)$ that the ordered set $(D_r,\prec_r)$ together with the function $g$ satisfies the assumption of Fact \ref{fact:htplus}.
\end{proof}

\noindent Proposition \ref{prop:infinitepreimage} is essentially a result of Thomas \cite[Theorem 1]{Thomas1975}, which itself is a slight generalization of a classical result of Elgot and Rabin \cite[Theorem 1]{ElgotRabin}. Therefore, there is a more direct proof of Proposition \ref{prop:infinitepreimage} that invokes \cite[Theorem 1]{Thomas1975} instead of Fact \ref{fact:htplus}. However, the fact that Proposition \ref{prop:infinitepreimage} follows directly from Fact \ref{fact:htplus} should be of independent interest. Among other things, this means that Fact \ref{fact:htplus} can be thought of as a generalization of \cite[Theorem 1]{Thomas1975}.\newline

\noindent We now use Proposition \ref{prop:infinitepreimage} to deduce an analogue of a theorem of Villemaire (see B{\`e}s \cite[Theorem 4.2]{Bes-Undecidable}). The main argument is taken from the proof of \cite[Theorem 2]{ElgotRabin}.

\begin{cor}\label{cor:infinitepreimage} Let $g\colon r^{-\N}  \to r^{-\N}$ and $\ell\in \N_{>0}$ be such that
\begin{itemize}
\item[(i)] $g$ is strictly increasing,
\item[(ii)] for every $m\in \N$ there is $d\in \N$ such that $m\leq d \leq m+\ell$ and
\[
g(r^{-(d+1)})<r^{-1}g(r^{-d}).
\]
\end{itemize}
Then $(\R,<,+,W_r,g)$ defines every compact set.
\end{cor}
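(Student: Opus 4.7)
The plan is to reduce to Proposition~\ref{prop:infinitepreimage} by constructing, inside $(\R,<,+,W_r,g)$, a definable function $f\colon r^{-\N}\to r^{-\ell'\N}$ (for some $\ell'\in\N_{>0}$) with $f^{-1}(r^{-\ell' d})$ infinite for every $d\in\N$.

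As preliminaries, the successor map $r^{-n}\mapsto r^{-(n+1)}$ on $r^{-\N}$ is $\emptyset$-definable in $\Cal S_r$ from the definable set $r^{-\N}$ and $<$. Combined with $g$, this lets us define the \emph{jump set}
\[
J \;:=\; \{\,r^{-n}\in r^{-\N}\,:\,g(r^{-(n+1)})<r^{-1}g(r^{-n})\,\}
\]
in $(\R,<,+,W_r,g)$, since $r^{-1}g(r^{-n})$ is simply the predecessor of $g(r^{-n})$ in $r^{-\N}$. Condition~(ii) says that $J$ meets every $\ell+1$ consecutive elements of $r^{-\N}$; equivalently, the indices of consecutive elements of $J$ differ by at most $\ell+1$. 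From $J$ one definably extracts the next-jump map $\pi\colon r^{-\N}\to J$, the ``step-past-then-next-jump'' map $\pi'\colon r^{-n}\mapsto\pi(r^{-(n+1)})$, and the displacement $\delta\colon r^{-\N}\to\{0,1,\ldots,\ell\}$.

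For the construction of $f$, I would follow the strategy of the proof of \cite[Theorem~2]{ElgotRabin}. The key observation is that the iterate $\pi'^k$ is first-order definable for each fixed $k$, and the resulting sequence of gap-lengths (differences of consecutive indices of visited jump positions) takes values in the fixed finite alphabet $\{1,\ldots,\ell+1\}$. Choosing $\ell'$ with $r^{\ell'}\geq\ell+2$, each gap-length fits into a base-$r^{\ell'}$ digit, so a prefix of length $k$ of this gap-sequence can be packed into a single element of $r^{-\ell'\N}$ using the digit-access provided by $W_r$. By allowing the prefix length $k$ to depend definably on $n$ (for instance, by reading off the prefix up to the finite ``depth'' encoded by the displacement and position data), the range of the resulting $f$ becomes unbounded, while each fixed finite gap-pattern realized by $g$ is realized for infinitely many starting positions $r^{-n}$, providing infinite fibres.

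The main obstacle is ensuring that every element of $r^{-\ell'\N}$ in the range is realized infinitely often. The gap-sequence of $J$ is not guaranteed to be combinatorially generic; in the extreme case $g(r^{-n})=r^{-cn}$ for an integer $c\geq 2$ (so condition~(ii) holds with $\ell=0$), the gap-sequence is constant and only a single pattern arises via the naive encoding above. In such ``rigid'' cases $g$ itself carries enough multiplicative structure (e.g.\ squaring when $c=2$) that one can build the required $f$ directly, or even define richer operations outright. A unified argument, modelled on the proof of \cite[Theorem~2]{ElgotRabin}, handles both the rigid and the generic regimes simultaneously; once the corresponding $f$ is in hand, Proposition~\ref{prop:infinitepreimage} immediately yields the conclusion.
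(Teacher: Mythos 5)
Your reduction target is right (Proposition~\ref{prop:infinitepreimage}), and your jump set $J$ is exactly the set $B$ the paper uses, but the actual construction of the function $f$ with infinite fibres is missing, and the route you sketch does not work. You notice the problem yourself: encoding prefixes of the gap-sequence of $J$ cannot produce infinite fibres over every element of $r^{-\ell'\N}$ when the gap-sequence is too regular (e.g.\ $g(r^{-n})=r^{-cn}$), and at that point you fall back on ``$g$ carries enough multiplicative structure to build $f$ directly'' and ``a unified argument handles both regimes'' --- but that unified argument is precisely the content of the corollary, and it is never supplied. There is also a uniformity issue you gloss over: $\pi'^k$ being definable \emph{for each fixed} $k$ does not let you definably read off a prefix whose length depends on the input; you need a single first-order formula handling all iteration depths at once.

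The paper's construction avoids gap-sequences entirely. For $r^{-e}\in B$ the point $g(r^{-e})/r$ lies strictly between $g(r^{-(e+1)})$ and $g(r^{-e})$, hence (by monotonicity of $g$) outside the image of $g$; since $g$ is injective, the forward $g$-orbits of the points $g(r^{-e})/r$, $r^{-e}\in B$, are pairwise disjoint infinite subsets of $r^{-\N}$. Define $h_1(r^{-d})=r^{-e}$ if $r^{-d}=g^m(g(r^{-e})/r)$ for some $m\in\N$ and $r^{-e}\in B$, and $h_1(r^{-d})=1$ otherwise; then $h_1^{-1}(x)$ is infinite exactly for $x\in B$. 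Definability of orbit membership is the Elgot--Rabin trick: $(r^{-d},r^{-e})$ is in the graph iff every $x\in[0,1)$ whose set of digit-$1$ positions contains $r^{-d}$ and is closed under $g$-preimages also has digit $1$ at position $g(r^{-e})/r$ --- a single first-order formula using $U_r$, with no dependence on the iteration depth $m$. Finally, hypothesis~(ii) makes $B$ syndetic with gap at most $\ell$, so composing $h_1$ with the rounding map $x\mapsto\max\left((-\infty,x]\cap r^{-\ell\N}\right)$ yields $f$ with infinite fibres over all of $r^{-\ell\N}$, and Proposition~\ref{prop:infinitepreimage} applies. Your proposal names the right ingredients ($J$, Elgot--Rabin, the reduction) but omits the one idea --- tracking $g$-orbits rooted at the ``fresh'' points $g(r^{-e})/r$ --- that makes the fibres infinite uniformly, so as written the proof has a genuine gap.
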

\begin{proof}
Let $B$ be the set of $r^{-d}\in r^{-\N}$ such that $g(r^{-d}) > r g(r^{-(d+1)})$.
Define $h_1\colon r^{-\N} \to r^{-\N}$ by
\[
h_1(r^{-d}) = \left\{
            \begin{array}{ll}
              r^{-e}, & \hbox{$r^{-d}=g^m(g(r^{-e})/r)$ for some $r^{-e} \in B$ and $m \in \N$,} \\
              1, & \hbox{otherwise.}
            \end{array}
          \right.
\]
It follows from the definition of $B$ that $h_1$ is well-defined. Observe that for all $x\in r^{-\N_{>0}}$, the set $h^{-1}(x)$ is infinite if and only if $x \in B$. We now show that $h_1$ is definable in $(\R,<,+,W_r,g)$.
Consider the set $X$ of all pairs $(r^{-d},r^{-e}) \in r^{-\N} \times r^{-\N}$ such that
\begin{align*}
\forall x \in [0,1) \Big( U_r(x,r^{-d},1) &\wedge \forall z \in r^{-\N} (U_r(x,g(z),1) \rightarrow U_r(x,z,1))\Big)\\
&\rightarrow U_r(x,g(r^{-e})/r,1).
\end{align*}
It is clear that $X$ is definable in $(\R,<,+,W_r,g)$. It can now be seen that the graph of $h_1$ is the union of $X$ with
\[
\{ (r^{-d},1)  \ : \ d\in \N, (r^{-d},r^{-e})\notin X \hbox{ for all } e\in \N\}.
\]
Let $h_2 \colon \R_{>0} \to r^{-\ell\N}$ map $x$ to $\max ((-\infty,x] \cap r^{-\ell\N})$. By (ii), we have $h_2(B) = r^{-\ell\N}$. Set $f:=h_2 \circ h_1$. Since $f^{-1}(x)$ is infinite for $x\in r^{-\ell\N}$, the structure $(\R,<,+,W_r,f)$ defines every compact set by Proposition \ref{prop:infinitepreimage}.
\end{proof}

\section{Proof of Theorem C}

Let $r,s\in\N_{\geq 2}$ be such that $\log_r(s) \notin \Q$. We will now show that $(\R,<,+,W_r,W_s)$ defines every compact set. As before, $D_r$ denotes the set of numbers in $[0,1)$ that admit a finite base $r$ expansion. For $t \in \N_{\geq 2}$, let $\supp(t)$ be the set of prime factors of $t$.

\subsection*{Case I: $\supp(r)\cap \supp(s)=\emptyset$} By Corollary \ref{cor:bool}, $(D_r-D_r)\cap (D_s-D_s)=\{0\}$. Therefore, $(\R,<,+,W_r,W_s)$ defines every compact set by Proposition \ref{lem:diff}.

\subsection*{Case II: $\supp(s)\subseteq \supp(r)$.} Let $m\leq n$ and write $r=p_1^{\alpha_1}\cdots p_n^{\alpha_n}$ and $s=p_1^{\beta_1}\cdots p_m^{\beta_m}$ for the prime factorizations of $r$ and $s$. Since $(\R,<,+,W_r)=(\R,<,+,W_{r^{\ell}})$ for every $\ell\in \N_{>1}$, we can assume that $1=\alpha_1/\beta_1 \leq \alpha_2/\beta_2\leq \cdots \alpha_m/\beta_m$. Since $r\neq s$, we also have $\log_s(r)>1$.\newline

\noindent Let $f\colon r^{-\N} \to r^{-\N}$ map $r^{-d}$ to $r^{-\lceil \log_s(r) d\rceil}$.

\begin{lem}\label{lem:rdivs}  The function $f$ is definable in $(\R,<,+,W_r,s^{-\N})$.
\end{lem}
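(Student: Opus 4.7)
The plan is to express $f$ as a composition of two definable steps. First, given $r^{-d}$, pick out the largest $v\in s^{-\N}$ with $v\le r^{-d}$; this is clearly definable using only $<$ and the parameter $s^{-\N}$. The point of this step is that the chosen $v$ equals $s^{-e}$ for $e=\lceil d\log_s(r)\rceil$, which is exactly the exponent required for $f(r^{-d})=r^{-e}$: indeed, $s^{-e}\le r^{-d}$ iff $s^e\ge r^d$ iff $e\ge d\log_s(r)$, and we want the smallest such $e$. Second, convert $s^{-e}$ back into $r^{-e}$ using the function $\tau_r$ from Section 3.

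The second step is the real content. The key observation is that under the case hypothesis $\supp(s)\subseteq\supp(r)$, every $s^{-e}$ lies in $D_r$, so we may apply $\tau_r$. I claim $\tau_r(s^{-e})=r^{-e}$ for each $e\ge 1$. To verify this, write $s^{-e}=p_1^{-e\beta_1}\cdots p_m^{-e\beta_m}$ using all $n$ primes of $r$ (with exponent $0$ for $p_{m+1},\ldots,p_n$) and apply Lemma \ref{lem:tau}: one gets $\tau_r(s^{-e})=r^{c}$ with
\[
c=\min_{i=1,\ldots,n}\left\lfloor \tfrac{d_i}{\alpha_i}\right\rfloor,
\]
where $d_i=-e\beta_i$ for $i\le m$ and $d_i=0$ otherwise. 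Since $e\ge 1$, the minimum is attained at some $i\le m$ and equals $-\max_{i\le m}\lceil e\beta_i/\alpha_i\rceil$. The normalization $1=\alpha_1/\beta_1\le\alpha_2/\beta_2\le\cdots\le\alpha_m/\beta_m$ forces $\lceil e\beta_i/\alpha_i\rceil\le e$ for every $i\le m$, with equality at $i=1$; hence $c=-e$, as claimed.

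Combining the two steps, I would define
\[
f(r^{-d}) = \tau_r\bigl(\max\{v\in s^{-\N} : v\le r^{-d}\}\bigr)\qquad\text{for }d\ge 1,
\]
and handle the edge case $d=0$ by $f(1)=1$ separately. Since $D_r$ and $\tau_r$ are definable in $(\R,<,+,W_r)$ by the discussion at the start of Section 3, and the ``max below $r^{-d}$ in $s^{-\N}$'' operation is definable from $s^{-\N}$ and $<$, this exhibits $f$ as a definable function in $(\R,<,+,W_r,s^{-\N})$. I do not expect a serious obstacle: the main content is the prime-factorization bookkeeping that reduces $\tau_r(s^{-e})$ to $r^{-e}$, and the paper's normalization $\alpha_1/\beta_1=1$ together with the monotonicity of the ratios $\alpha_i/\beta_i$ is set up precisely so that this reduction works.
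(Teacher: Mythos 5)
Your proof is correct and is essentially the paper's own argument: the paper defines $\theta(x)=\max((0,x]\cap s^{-\N})$, notes $\theta(r^{-d})=s^{-\lceil d\log_s(r)\rceil}$ and $\tau_r(s^{-e})=r^{-e}$ (the latter exactly via Lemma \ref{lem:tau} and the normalization $\alpha_1=\beta_1$), and concludes that $f$ is the composite of these two definable maps. Your computation of $\tau_r(s^{-e})$ merely spells out the detail the paper leaves to the reader, and you write the composition in the correct order $\tau_r\circ\theta$ (the paper's ``$f=\theta\circ\tau_r$'' is an evident typo).
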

\begin{proof}
It follows easily from Lemma \ref{lem:tau} and $\alpha_1=\beta_1$ that $\tau_r(s^{-d})=r^{-d}$ for every $d\in \N$. Let $\theta \colon \R_{>0} \to s^{-\N}$ map $x$ to $\max ((0,x] \cap s^{-\N})$. Observe that $\theta$ is definable in $(\R,<,+,s^{-\N})$. Since $s^{-\lfloor \log_s(r) d\rfloor} >  r^{-d} > s^{-\lceil \log_s(r) d\rceil}$, we have that $\theta(r^{-d}) = s^{-\lceil \log_s(r) d\rceil}$. Thus, $f= \theta\circ \tau_r$.
\end{proof}

\begin{prop}\label{prop:rdivs} The structure $(\R,<,+,W_r,s^{-\N})$ defines every compact set.
\end{prop}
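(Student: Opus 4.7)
The plan is to verify that the function $f\colon r^{-\N}\to r^{-\N}$, defined by $f(r^{-d}) = r^{-\lceil \log_s(r) d\rceil}$, satisfies the two hypotheses of Corollary \ref{cor:infinitepreimage} (with $g=f$), and then to combine that corollary with Lemma \ref{lem:rdivs}. Set $\alpha := \log_s(r)$. The assumptions of Case II together with the normalization $1 = \alpha_1/\beta_1 \leq \alpha_2/\beta_2 \leq \cdots \leq \alpha_m/\beta_m$ and $m\leq n$ imply $r>s$, hence $\alpha>1$; and $\alpha$ is irrational because $\log_r(s)\notin\Q$.

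Condition (i) of Corollary \ref{cor:infinitepreimage} is immediate, since $d\mapsto\lceil\alpha d\rceil$ is strictly increasing when $\alpha>1$. For condition (ii), I observe that $f(r^{-(d+1)}) < r^{-1} f(r^{-d})$ is equivalent to the integer inequality $\lceil\alpha(d+1)\rceil - \lceil\alpha d\rceil \geq 2$. I split into two subcases. If $\alpha\geq 2$, then $\lfloor\alpha\rfloor \geq 2$ forces this difference to be $\geq 2$ for every $d$, so (ii) holds with $\ell=1$. If $\alpha\in(1,2)$, the difference lies in $\{1,2\}$, and equals $2$ precisely when the fractional part $\{\alpha d\}$ lies in $[2-\alpha,1)$. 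Since $\alpha$ is irrational, the circle rotation $x\mapsto x+\alpha\pmod 1$ is minimal, so by compactness the set of $d\in\N$ with $\{\alpha d\}\in [2-\alpha,1)$ has bounded gaps, providing the uniform $\ell$ required by (ii).

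Once (i) and (ii) are in place, Corollary \ref{cor:infinitepreimage} yields that $(\R,<,+,W_r,f)$ defines every compact set. Since $f$ is itself definable in $(\R,<,+,W_r,s^{-\N})$ by Lemma \ref{lem:rdivs}, the same conclusion transfers to $(\R,<,+,W_r,s^{-\N})$, completing the proof. The main obstacle is the verification of (ii) in the subcase $\alpha\in(1,2)$: it relies on the (standard but genuinely dynamical) fact that return times of a minimal rotation to a nonempty open set are syndetic; the rest of the argument is elementary bookkeeping.
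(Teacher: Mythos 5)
Your proof is correct, and its skeleton is identical to the paper's: the same function $f(r^{-d})=r^{-\lceil \log_s(r)d\rceil}$, the same appeal to Lemma \ref{lem:rdivs} to transfer definability, and the same reduction to Corollary \ref{cor:infinitepreimage}. The only place you diverge is in verifying condition (ii), and there the two arguments are genuinely different in flavor. The paper telescopes the one-step estimate $f(r^{-(d+1)})\leq r^{-1}f(r^{-d})$ over a window of length $k$ chosen so that $\lceil k\log_s(r)\rceil\geq k+2$, which forces $f(r^{-(d+k)})<r^{-k}f(r^{-d})$ for every $d$; by pigeonhole some single step in each such window must drop by at least $r^2$, which is exactly (ii). This is purely arithmetic and uses only $\log_s(r)>1$ (irrationality is never needed). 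You instead pin down exactly when the drop is doubled, via $\lceil\alpha(d+1)\rceil-\lceil\alpha d\rceil\geq 2$ and the condition $\{\alpha d\}\in[2-\alpha,1)$ in the case $\alpha\in(1,2)$, and then invoke syndeticity of return times of the minimal rotation by the irrational $\alpha=\log_s(r)$. That is a valid argument (and the irrationality you need is indeed available, since $\log_r(s)\notin\Q$), but it imports a dynamical fact where an elementary pigeonhole suffices, and it uses a hypothesis the paper's computation does not require. Your characterization of the doubled drop also technically misses the boundary case $d=0$, where the difference is $2$ even though $\{\alpha\cdot 0\}=0\notin[2-\alpha,1)$; this is harmless, since only the implication from $\{\alpha d\}\in[2-\alpha,1)$ to a doubled drop, together with syndeticity, is actually used.
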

\begin{proof}
Let $\ell \in \N$. Then,
\begin{align*}
f(r^{-(d+\ell)}) &= r^{-\lceil  \log_s(r)(d+\ell) \rceil}  \leq  r^{-(\lceil  \log_s(r)d\rceil + \lceil  \ell\log_s(r)\rceil - 1) } \\
&= f(r^{-d}) r^{- \lceil  \ell\log_s(r)\rceil - 1}.
\end{align*}
As $\log_s(r)>1$, we have  $rf(r^{-(d+1)})\leq f(r^{-d})$ and $\lceil (\ell+1)\log_s(r)\rceil >\ell + 1$ for sufficiently large $\ell \in\N$. Thus, there is $k \in \N$ such that $f(r^{-(d+k)})<r^{-k}f(r^{-d})$ for all $d\in \N$. Therefore, $f$ satisfies the assumption of Corollary \ref{cor:infinitepreimage}, so $(\R,<,+,W_r,f)$ defines every compact set. By Lemma \ref{lem:rdivs}, $(\R,<,+,W_r,s^{-\N})$ defines every compact set.
\end{proof}

\subsection*{Case III: Otherwise} Write $r=p_1^{\alpha_1}\cdots p_m^{\alpha_m}$ and $s=q_1^{\beta_1}\cdots q_n^{\beta_n}$ for the prime factorizations of $r$ and $s$. Let $t := \gcd(r,s)$, and without loss of generality let $t=p_1^{\gamma_1}\dots p_k^{\gamma_k}$ be the prime factorization of $t$.  By Cases I and II, we may assume that $0< k<m$. Let $u:=p_1^{\alpha}\cdots p_k^{\alpha_k}$ and
let $v:=p_{k+1}^{\alpha_{k+1}}\dots p_m^{\alpha_m}$.

\begin{lem}\label{lem:commonp} Let $d\in \N$. Then $u^{-d}$ is the $<$-smallest element $x$ in $D_u\setminus\{0\}$ such that $\tau_r(x)=r^{-d}$.
\end{lem}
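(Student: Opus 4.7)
\medskip

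\noindent\textbf{Proof plan.} The plan is to apply Lemma~\ref{lem:tau}, which gives an explicit formula for $\tau_r$ in terms of prime factorizations, to a general element of $D_u\setminus\{0\}$ and read off the inequality $x\geq u^{-d}$ from the resulting minimum.

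First I would verify the claim for $u^{-d}$ itself: it lies in $D_u\setminus\{0\}$ by inspection, and writing $u^{-d}=p_1^{-d\alpha_1}\cdots p_k^{-d\alpha_k}$ (with exponent zero at the primes $p_{k+1},\ldots,p_m$), Lemma~\ref{lem:tau} yields $\tau_r(u^{-d})=r^{-d}$, since the minimum in its formula evaluates to $\min(-d,\ldots,-d,0,\ldots,0)=-d$. Then, for minimality, I would take any $x\in D_u\setminus\{0\}$ with $\tau_r(x)=r^{-d}$ and use Lemma~\ref{lem:exp}(1) to conclude that only the primes $p_1,\ldots,p_k$ can appear in the denominator of $x$ in lowest terms. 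After extracting any factors of $p_{k+1},\ldots,p_m$ from the numerator, $x$ can be written as $x=j\,p_1^{d_1}\cdots p_m^{d_m}$ with $j\in\N_{>0}$ satisfying $p_i\nmid j$ for every $i\leq m$, $d_i\in\Z$ for $i\leq k$, and $d_i\geq 0$ for $i>k$. Lemma~\ref{lem:tau} then reads $-d=\min_{i\leq m}\lfloor d_i/\alpha_i\rfloor$. Since $d>0$ and the terms with $i>k$ are nonnegative, the minimum must be attained on some $i\leq k$, and in particular $d_i\geq -d\alpha_i$ for every $i\leq k$; multiplying all factors then yields $x\geq u^{-d}$.

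The main obstacle is a purely bookkeeping one: Lemma~\ref{lem:tau} requires the coefficient $j$ to be coprime to \emph{all} of $p_1,\ldots,p_m$, so in the normalization step one must remember to pull out any factors of $p_{k+1},\ldots,p_m$ from the numerator even though these primes do not appear in $u$. The nonnegativity of the resulting exponents $d_i$ for $i>k$ is precisely what forces the minimum in the $\tau_r$-formula to occur on the $p_1,\ldots,p_k$ side and produces the desired lower bound $x\geq u^{-d}$.
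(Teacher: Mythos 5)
Your proof is correct. It reaches the same conclusion by a slightly different mechanism than the paper: the paper invokes Lemma~\ref{lem:exp}(2) to get that $wr^d\in\N_{>0}$ with $r\nmid wr^d$, notes that $w\in D_u$ forces $v^d\mid wr^d$ (where $r=uv$), and concludes that $wu^d$ is a positive integer, hence $\geq 1$, i.e.\ $w\geq u^{-d}$; you instead normalize $x$ as $j\,p_1^{d_1}\cdots p_m^{d_m}$ and read the bounds $d_i\geq -d\alpha_i$ (for $i\leq k$) and $d_i\geq 0$ (for $i>k$) off the floor formula of Lemma~\ref{lem:tau}. Both arguments hinge on the same observation --- membership in $D_u$ keeps $p_{k+1},\ldots,p_m$ out of the denominator, so the binding constraint comes only from the $u$-part --- and your bookkeeping about pulling the primes $p_{k+1},\ldots,p_m$ out of the numerator is exactly the care needed to apply Lemma~\ref{lem:tau} legitimately. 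The paper's integrality phrasing is a bit more compact; yours is more explicit but equally valid. One cosmetic point: your remark about \emph{where} the minimum in the $\tau_r$-formula is attained is not actually needed --- all the lower bound uses is that every $\lfloor d_i/\alpha_i\rfloor$ is at least $-d$.
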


\begin{proof}
It follows directly from Lemma \ref{lem:tau} that $\tau_r(u^{-d})=r^{-d}$. Let $w\in D_u\setminus\{0\}$ be such that $\tau_r(w)=r^{-d}$. By Lemma \ref{lem:exp}(ii), we have $wr^d\in\N_{>0}$ with $r\nmid wr^d$. As $r=uv$ and $w\in D_u$, it follows that $v^d\mid wr^d$. Thus $wu^d\in\N_{>0}$, so that $w\geq u^{-d}$.
\end{proof}


\noindent  By Lemma \ref{lem:bool}(iii), $D_u=D_t$. By Lemma \ref{lem:bool}(i), $D_t$ is definable in $(\R,<,+,W_r,W_s)$, and thus so is $D_u$. Combining this with Lemma \ref{lem:commonp}, we get that $(\R,<,+,W_r,W_s)$ defines $u^{-\N}$. By Proposition \ref{prop:rdivs}, $(\R,<,+,W_r,u^{-\N})$ defines every compact set.

  \bibliographystyle{plain}
  \bibliography{hieronymi}

\end{document}